\newcommand{\ra}[1]{\renewcommand{\arraystretch}{#1}}
\newtheorem{thm}{Theorem}
\newtheorem{lm}{Lemma}
\newtheorem{cor}{Corrolary}
\newtheorem{dfn}{Definition}
\newtheorem{prop}{Proposition}
\crefname{algocf}{Algorithm}{Algorithms}
\crefname{thm}{Theorem}{Theorems}
\definecolor{metro_teal}{HTML}{23373b}
\definecolor{light_teal}{HTML}{7E9AA1}
\definecolor{edge_gray}{gray}{.40}
\begin{document}

\title{Configuration Models of Random Hypergraphs}

\author{Philip S. Chodrow}
\thanks{Operations Research Center, Massachusetts Institute of Technology, Cambridge, MA 02139, USA, \texttt{pchodrow@mit.edu}}

\date{\today}

\begin{abstract}
	Many empirical networks are intrinsically polyadic, with interactions occurring within groups of agents of arbitrary size. 
	There are, however, few flexible null models that can support statistical inference for such polyadic networks. 
	We define a class of null random hypergraphs that hold constant both the node degree and edge dimension sequences, generalizing the classical dyadic configuration model.
  We provide a Markov Chain Monte Carlo scheme for sampling from these models, and discuss connections and distinctions between our proposed models and previous approaches. 
	We then illustrate these models through a triplet of applications. 
	We start with two classical network topics -- triadic clustering and degree-assortativity.
	In each, we emphasize the importance of randomizing over hypergraph space rather than projected graph space, showing that this choice can dramatically alter statistical inference and study findings. 
	We then define and study the edge intersection profile of a hypergraph as a measure of higher-order correlation between edges, and derive asymptotic approximations under the stub-labeled null.
  Our experiments emphasize the ability of explicit, statistically-grounded polyadic modeling to significantly enhance the toolbox of network data science. 
  We close with suggestions for multiple avenues of future work. 
\end{abstract}

\maketitle

Graphs provide parsimonious mathematical descriptions of systems comprised of objects (nodes) and dyadic relationships (edges). 
When analyzing a given graph, a common task is to compare an observable of interest to its distribution under a suitably specified null model. 
A standard choice of null for dyadic networks is the class of \emph{configuration models} \cite{bender1978asymptotic,Bollobas1980,Molloy1998,Fosdick2018}.
Configuration models preserve the degree sequence of the graph, which counts the number of edges incident to each node.
These counts are natural first-order statistics of the graph, which are known to constrain many macroscopic graph properties \cite{Newman2001}.
Preserving these counts gives a natural null model constraint: properties observed in data that are not present in a configuration model require explanation in terms of higher-order graph structures. 

In many systems of contemporary interest, groups of arbitrary size may interact simultaneously.
Examples include social contact networks \cite{Stehle2011,Mastrandrea2015}; scholarly and professional collaboration networks; \cite{Newman2001b,Barabasi2008,Fowler2006,Porter2005}; digital communications \cite{Klimt2004}; classifications on patents \cite{Youn2015}; and many more \cite{Benson2018}.
In the past decades, the dominant approach to these systems has been to represent these networks dyadically, allowing the analyst to apply standard techniques of dyadic network science, including the configuration model. 
Recent work, however, has highlighted limitations of the dyadic paradigm in modeling of polyadic systems, both in theory \cite{Schaub2018a} and in application domains including neuroscience \cite{Giusti2016}, ecology \cite{Grilli2017}, computational social science \cite{Ugander2012a,Benson2016} among others \cite{Benson2018}.
The importance of polyadic interactions calls into question the use of the dyadic configuration model in such systems.
It is therefore desirable to construct random models for polyadic data that inherit the useful properties of the dyadic configuration model. 
In this article, we construct two such models on suitably chosen spaces of hypergraphs, and demonstrate their utility for polyadic network data science. 
Along the way, we argue for two principle theses. 
First, the choice between dyadic and polyadic null models can determine the directional findings of standard network analyses. 
Second, the use of polyadic nulls allows the analyst to measure and test rich measures of polyadic structure, thereby expanding the network-scientific toolbox. 

\subsection*{Outline}

  We begin in \Cref{sec:lit_review} with a survey of the landscape of null models for relational data, including the dyadic configuration model, random hypergraphs, and random simplicial complexes.
  In \Cref{sec:model_def}, we define stub- and vertex-labeled configuration models of random hypergraphs.
  Practical application of these models requires a sampling scheme, which we provide in \Cref{sec:sampling}.
  We turn to a triplet of illustrative applications in \Cref{sec:applications}.
  We first consider triadic closure, showing that some networks that would be considered clustered in comparison to dyadic nulls are significantly \emph{less} clustered than the corresponding hypergraph nulls.
  We then turn to degree-assortativity, where hypergraph data representations allow us to define novel measures and conduct null hypothesis tests.
  Finally, we introduce a novel measure of correlation between polyadic edges, which can be tested against either the full configuration model or analytic approximations.
  We close in \Cref{sec:conclusion} with a summary of our findings and suggestions for future development.

\section{Graphs, Hypergraphs, and Simplicial Complexes} \label{sec:lit_review}
  
  Random graph null modeling has a rich history; see \cite{Fosdick2018} for a review. 
  In this section, we take a rapid tour through some of the most important results in configuration-type models for graphs and their generalizations. 
  We begin with a brief review of the configuration model for dyadic graphs.
  \begin{dfn}[Graph]
    A \emph{graph} $G = (V,E)$ consists of a finite set $V$ of nodes or vertices and a multiset $E$ of pairs of nodes, also called \emph{edges}.
    We assume that both sets are endowed with an (arbitrary) order. 
    An edge of the form $(u,u)$ is called a \emph{self-loop}. 
    Two distinct edges $e_1$ and $e_2$ are \emph{parallel} if they are equal as sets. 
  \end{dfn}
  Let $n = \abs{V}$ and $m = \abs{E}$ be fixed.
  We denote by $\mathcal{G}^{\lcirclearrowright}$ the set of all graphs on $n$ nodes, and by $\mathcal{G} \subset \mathcal{G}^{\lcirclearrowright}$ the set of graphs on $n$ nodes without self-loops. 
  Parallel edges are permitted in $\mathcal{G}$.  
  While it is indeed possible to define configuration models on $\mathcal{G}^{\lcirclearrowright}$, we do not do so here \cite{Fosdick2018,nishimura2018connectivity}. 
  Rather we will formulate most of our results for elements of $\mathcal{G}$, only discussing $\mathcal{G}^{\lcirclearrowright}$ below in the context of certain technical issues. 
  
  The degree sequence of a graph $G = (V, E)$ is the vector $\mathbf{d}\in \mathbb{Z}^n$ defined componentwise as 
  \begin{align}
      d_v = \sum_{e \in E}\mathbbm{I}(v \in e)\;.
  \end{align}
  A configuration model is a probability distribution on the set $\mathcal{G}_\D = \{G \in \mathcal{G}: \mathrm{deg}(G) = \D\}$ of graphs with degree sequence $\D$. 
  There are two closely-related model variants which should be distinguished \cite{Fosdick2018}. 
  On its first introduction \cite{Bollobas1980}, the configuration model was defined mechanistically through a ``stub-matching'' algorithm. 
  To perform stub-matching, we place $d_v$ labeled half-edges (or ``stubs'') into an urn for each node $v$. 
  We draw half-edges two at a time, with each draw producing an edge. 
  A stub-labeled graph ``remembers'' which labeled stubs were drawn to form each edge. 
  \begin{dfn}[Stub-Labeled Graphs] \label{def:stub}
    For a fixed node set $V$ and degree sequence $\D$, define the multiset
    \begin{align*}
      \Sigma_\D = \biguplus_{v \in V}\left\{ v_1,\ldots,v_{d_v} \right\}\;,
    \end{align*}
    where $\uplus$ denotes multiset union.
    The copies $v_1,\ldots,v_{d_v}$ are called \emph{stubs} of node $v$.
    A \emph{stub-labeled graph} $S = (V,E)$ consists of the node set $V$  and an edge set $E = \{\{u_i,v_i\}\}_{i=1}^m$ which partitions $\Sigma_\D$ into unordered pairs.
    An edge of the form $\{v_i,v_j\}$ is called a \emph{self-loop}. 
  \end{dfn}
    Let $\mathcal{S}^{\lcirclearrowright}$ be the set of stub-labeled graphs, and $\mathcal{S} \subset \mathcal{S}^{\lcirclearrowright}$ the set without self-loops. 
    Technically speaking, one should remember that the set $\mathcal{S}^{\lcirclearrowright}$ of stub-labeled graphs is not a subset of the set $\mathcal{G}^{\lcirclearrowright}$ of graphs, since the objects in the edge-set are of different logical types. 
    The same is true of the sets $\mathcal{S}$ and $\mathcal{G}$. 
    These technical considerations will also apply when we generalize to hypergraphs below, but will not present any major practical issues.   
    
    There is a natural surjection $g:\mathcal{S}^{\lcirclearrowright} \rightarrow \mathcal{G}^{\lcirclearrowright}$. 
    If $S \in \mathcal{S}^{\lcirclearrowright}$, $g(S) \in \mathcal{G}^{\lcirclearrowright}$ is the graph obtained by replacing each stub $v_i$ in $S$ with $v$ and then consolidating the result as a multiset.
    We use the notation $A = g^{-1}(G)$ to refer to the preimage $A \subseteq \mathcal{S}$ of $G\subseteq \mathcal{G}$ by $g$. 
    We emphasize that $g$ is not a bijection, and the symbol $g^{-1}$ should not be interpreted as an inverse of $g$. 
    We define $\mathcal{S}_\D^{\lcirclearrowright}$ to be  $g^{-1}\left(\mathcal{G}_{\D}^{\lcirclearrowright}\right)$.  
    Note that an edge $\tilde{e} \in S$ is a self-loop if and only if $e \in g(S)$ is. 
    Because of this, $\mathcal{S} = g^{-1}\left(\mathcal{G}\right)$. 
    It is therefore natural to define $\mathcal{S}_{\D} = g^{-1}(\mathcal{G}_\D)$. 
    
  \begin{dfn}[Dyadic Configuration Models \cite{Fosdick2018}]
    Fix $\D \in \mathbb{Z}_+^n$. 
    The \emph{vertex-labeled configuration model} on $\mathcal{G}_\D$ is the uniform distribution $\eta_{\D}$. 
    Let $\lambda_{\D}$ be the uniform distribution on $\mathcal{S}_{\D}$.
    The \emph{stub-labeled configuration model} on $\mathcal{G}_\D$ is the distribution $\mu_{\D} = \lambda_{\D}\circ g^{-1}$. 
  \end{dfn}
  In our formalism, the stub-labeled configuration model is not a distribution over the space of stub-labeled graphs $\mathcal{S}_{\D}$. 
  Rather, it is the pushforward of such a distribution to the space $\mathcal{G}_\D$ of graphs. 
  Intuitively, the vertex-labeled configuration model assigns the same probability to each graph with degree sequence $\D$, while the stub-labeled model weights these graphs according to their likelihood of being realized via stub-matching. 
  One of the key insights of \cite{Bollobas1980}, since generalized by works such as \cite{Molloy1998,Angel2016}, is that these two models are related. 
  Let $\mathcal{G}_{\mathrm{simple}}$ be the set of \emph{simple graphs}, which contain neither self-loops nor parallel edges. 
  Then, $\mu_{\D}(G|G \in \mathcal{G}_{\mathrm{simple}}) = \eta_{\D}(G|G \in \mathcal{G}_{\mathrm{simple}})$. 
  Furthermore, when the degree sequence is sampled from a fixed distribution with finite second moment, $\mu_{\D}(G \in \mathcal{G}_{\mathrm{simple}})$ is bounded away from zero as $n$ grows large (see, e.g. \cite{Angel2016}), implying that repeated sampling from $\mu_{\D}$ will produce a simple graph in a number of repetitions that is asymptotically constant with respect to $n$. 
  As a result, in the ``large, sparse regime,'' it is possible to sample from the stub-labeled configuration model until a simple graph is obtained, which will then be distributed according to the vertex-labeled model. 
  This relationship is extremely convenient, enabling asymptotic analytic expressions for many quantities of theoretical and practical interest \cite{Newman2001}.
  
  This close relationship between models is likely the reason why the distinction between them has often been elided in applied network science. 
  Recently, however, the authors of \cite{Fosdick2018} pointed out that, in many data sets, these two models are not interchangeable.
  It is important to distinguish them when the data may possess multi-edges or self-loops and the edge density is relatively high. 
  The first condition is important because stub- and vertex-labeled models agree only on the subspace of simple graphs, not the full space of multigraphs. 
  The second condition locates us away from the large, sparse regime and implies that parallel edges will occur under stub-matching with non-negligible probability. 
  
  From a modeling perspective, the choice of vertex- or stub-labeling must depend on domain-specific reasoning about counterfactual comparisons. 
  Roughly, stub-labeling should be used when, for a fixed graph $G \in \mathcal{G}$, the elements of the set $g^{-1}(G) \subset \mathcal{S}$ have distinct identities in the context of the application domain. 
  This corresponds to asking whether permutations of stubs lead to meaningfully different counterfactual data sets. 
  In contrast, when stub-permutations are either nonsensical or are considered to leave the observed data unchanged, vertex-labeling is to be preferred. 
  For example, in \cite{Fosdick2018}, the authors argue that vertex-labeled nulls are most appropriate for studying a collaboration network of computational geometers. 
  Their reason is that stubs in this case correspond to an author's participation in a paper.
  It is nonsensical to say that  $A$'s first collaboration with $B$ is $B$'s second collaboration with $A$, and therefore stub-labeling is inappropriate. 

  Configuration models and their variants have played a fundamental role in the development of modern network science. 
  The seminal paper by Molloy and Reed \cite{molloy1995critical} has, according to Google Scholar, been cited at least 2,000 times since its publication, and over 800 times since 2015. 
  How can we extend these models for application to polyadic data sets? 
  A direct approach, taken in early studies such as \cite{Newman2001b}, is to compute the \emph{projected (dyadic) graph}.
  The projected graph represents each $k$-adic interaction as a $k$-clique, which contains an edge between each of the possible $\binom{k}{2}$ pairs of nodes (\Cref{fig:toy}). 
  The resulting dyadic graph may then be randomized according to vertex- or stub-labeled dyadic configuration models. 
  Projecting, however, can have unintended and occasionally counterintuitive consequences.
  First and most clearly, all properties which depend explicitly on the presence of higher-dimensional interactions are lost. 
  Second, other observables such as node degrees and edge multiplicities may be transformed in undesirable ways; for example, a single interaction between six agents becomes 15 pairwise interactions after projection. 
  As consequence, each of the six agents involved in a single $6$-adic interaction are dyadically represented as nodes of degree $5$. 
  Third, and most subtly, projecting transforms the null space for downstream hypothesis-testing in ways that may not be intended.
  For example, projecting the network in \Cref{fig:toy} prior to randomization implicitly chooses a null space of counterfactuals consisting of 17 two-author papers.
  This may be undesirable, especially when the null is viewed as a candidate data generating process. 
  Given that the data possesses higher-order interactions, a null model that is by construction unable to produce such interactions may not be physically relevant to the problem at hand. 
  
	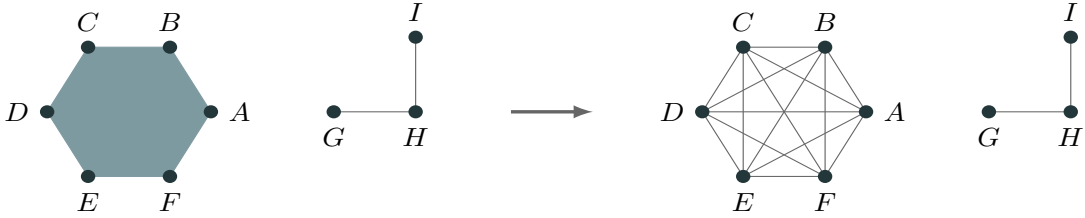
\begin{figure}
		\centering
		
		\resizebox{1\textwidth}{.2\textwidth}{
				
\begin{tikzpicture}
	\coordinate (A) at (1,0);
	\coordinate (B) at (.5,0.86602540378);
	\coordinate (C) at (-.5,0.86602540378);
	\coordinate (D) at (-1,0);
	\coordinate (E) at (-.5,-0.86602540378);
	\coordinate (F) at (.5,-0.86602540378);

	\coordinate (G) at (2.5, 0);
	\coordinate (H) at (3.5, 0);
	\coordinate (I) at (3.5, 1);
	
	\filldraw[draw=light_teal, fill=light_teal] (A) -- (B) -- (C)  -- (D) -- (E) -- (F) -- cycle;
	
	\draw[draw=edge_gray] (G) -- (H);
	\draw[draw=edge_gray] (H) -- (I);

	\node[circle, fill=metro_teal, minimum size=5pt, inner sep=0pt] (a) at (A) [label=right:$A$]{};
	\node[circle, fill=metro_teal, minimum size=5pt, inner sep=0pt] (b) at (B) [label=above:$B$]{};
	\node[circle, fill=metro_teal, minimum size=5pt, inner sep=0pt] (c) at (C) [label=above:$C$]{};
	\node[circle, fill=metro_teal, minimum size=5pt, inner sep=0pt] (d) at (D) [label=left:$D$]{};
	\node[circle, fill=metro_teal, minimum size=5pt, inner sep=0pt] (e) at (E) [label=below:$E$]{};
	\node[circle, fill=metro_teal, minimum size=5pt, inner sep=0pt] (f) at (F) [label=below:$F$]{};

	\node[circle, fill=metro_teal, minimum size=5pt, inner sep=0pt] (g) at (G) [label=below:$G$]{};
	\node[circle, fill=metro_teal, minimum size=5pt, inner sep=0pt] (h) at (H) [label=below:$H$]{};
	\node[circle, fill=metro_teal, minimum size=5pt, inner sep=0pt] (i) at (I) [label=above:$I$]{};

	\coordinate (A) at (9,0);
	\coordinate (B) at (8.5,0.86602540378);
	\coordinate (C) at (7.5,0.86602540378);
	\coordinate (D) at (7,0);
	\coordinate (E) at (7.5,-0.86602540378);
	\coordinate (F) at (8.5,-0.86602540378);

	\coordinate (G) at (10.5, 0);
	\coordinate (H) at (11.5, 0);
	\coordinate (I) at (11.5, 1);
	
	\draw[draw=edge_gray] (A) -- (B) -- (C)  -- (D) -- (E) -- (F) -- cycle;
	
	\draw[draw=edge_gray] (A) -- (C) -- (E) -- cycle;
	\draw[draw=edge_gray] (B) -- (D) -- (F) -- cycle;
	\draw[draw=edge_gray] (A) -- (D);
	\draw[draw=edge_gray] (B) -- (E);
	\draw[draw=edge_gray] (F) -- (C);

	\draw[draw=edge_gray] (G) -- (H);
	\draw[draw=edge_gray] (H) -- (I);

	\node[circle, fill=metro_teal, minimum size=5pt, inner sep=0pt] (a) at (A) [label=right:$A$]{};
	\node[circle, fill=metro_teal, minimum size=5pt, inner sep=0pt] (b) at (B) [label=above:$B$]{};
	\node[circle, fill=metro_teal, minimum size=5pt, inner sep=0pt] (c) at (C) [label=above:$C$]{};
	\node[circle, fill=metro_teal, minimum size=5pt, inner sep=0pt] (d) at (D) [label=left:$D$]{};
	\node[circle, fill=metro_teal, minimum size=5pt, inner sep=0pt] (e) at (E) [label=below:$E$]{};
	\node[circle, fill=metro_teal, minimum size=5pt, inner sep=0pt] (f) at (F) [label=below:$F$]{};

	\node[circle, fill=metro_teal, minimum size=5pt, inner sep=0pt] (g) at (G) [label=below:$G$]{};
	\node[circle, fill=metro_teal, minimum size=5pt, inner sep=0pt] (h) at (H) [label=below:$H$]{};
	\node[circle, fill=metro_teal, minimum size=5pt, inner sep=0pt] (i) at (I) [label=above:$I$]{};

	\pgfsetlinewidth{.3ex} 

	\draw [->,>=latex,draw=edge_gray] (4.664,0) -- (5.664,0);

\end{tikzpicture}
		}
		
		\caption{A synthetic coauthorship network with $n = 9$ nodes. 
		On the left, the network represented as a hypergraph with $3$ hyperedges. 
		On the right, the projected graph with $17$ dyadic edges.  
		}.  \label{fig:toy}
	\end{figure}

  \subsection*{Random Hypergraphs}
    Considerations such as these motivate the development of dedicated null models for polyadic data.
    Such models enable the analyst to delay or omit dyadic projection when conducting null-hypothesis testing.
    We now make a brief survey of efforts to define configuration-type models for polyadic data.
    Hypergraphs provide the most general context for such models.  
    Hypergraphs are straightforward generalizations of graphs in which each edge is permitted to have an arbitrary number of nodes. 
    \begin{dfn}[Hypergraph] \label{def:hypergraphs}
      A \emph{hypergraph} $H = (V,E)$ consists of a node set $V$ and an edge set $E = \{\Delta_j\}_{j = 1}^m$ which is a multiset of multisets of $V$.
      Each subset is called a \emph{hyperedge}, \emph{edge}, or, in some contexts, a \emph{simplex}. 
      Two hyperedges are \emph{parallel} if they are equal as multisets. 
      A hyperedge is \emph{degenerate} if it contains two copies of the same node. 
    \end{dfn}
    Degenerate hyperedges generalize the notion of self-loops in dyadic graphs. 
    We denote by $\mathcal{H}^\lcirclearrowright$ the set of all hypergraphs and by $\mathcal{H}$ the set of all hypergraphs without degenerate edges. 
    As before, parallel edges are permitted in $\mathcal{H}$. 
    We continue to let $n = \abs{V}$ and $m = \abs{E}$. 
    
    Extant literature provides several approaches to defining null distributions on hypergraphs.
    One of the earliest approaches \cite{Newman2001} takes a somewhat indirect route through bipartite graphs.
    A bipartite graph contains nodes of two classes, with connections permitted only between nodes of differing classes.
    To construct a bipartite graph $B$ from a hypergraph $H$, one can construct a layer of nodes in $B$ corresponding to the nodes $V$ of $H$, and a second layer in $B$ corresponding to the edges $E$ of $H$.
    A node $v$ is linked to an edge-node $e$ iff $v\in e$ in $H$.
    We can now apply dyadic configuration models to the randomize $B$, before recovering a hypergraph by projecting $B$ onto its node layer.
  	This approach is natural and convenient, but is only able to gracefully recover a generalization of the stub-labeled configuration model to hypergraphs.
    Generalizing the vertex-labeled model requires more complex tools which are not gracefully expressed in the bipartite formalism. 
  	We go into greater detail on this connection when discussing sampling methods in \Cref{sec:sampling}. 

    A more direct approach is to define a null distribution directly over $\mathcal{H}$.
    In \cite{Ghoshal2009}, the authors define an analog of the stub-labeled configuration model over the set of hypergraphs in which all edges have three nodes,  in the service of studying a tripartite tagging network on an online platform.
    Somewhat more general models have been formulated for the purposes of community-detection in hypergraphs via modularity maximization, which requires the specification of a suitable null.
    In \cite{Kumar2018}, the authors develop a degree-preserving randomization via a ``corrected adjacency matrix,'' which may then be used for modularity maximization on the projected dyadic graph.
    In \cite{Kami2018}, the authors explicitly generalize the model of Chung and Lu \cite{Chung2002}, which preserves degrees in expectation, to non-uniform hypergraphs. 
    
    One subspecies of hypergraph has received additional attention.  
    A \emph{simplicial complex} is a hypergraph with additional structure imposed by a subset-inclusion relation: if $\Delta \in E$, then $\Gamma\in E$ for all $\Gamma \subseteq \Delta$. 
    Simplicial complexes are attractive tools in studying topological aspects of discrete data \cite{Carlsson2009a}, since the inclusion condition enables often-dramatic data compression while preserving topological features of interest.
    Configuration models of simplicial complexes provide one route for conducting null hypothesis tests of such features. 
    The model of \cite{Courtney2016} achieves analytic tractability by restricting to simplicial complexes with maximal hyperedges of uniform dimension.
    The authors \cite{Young2017} allow heterogeneous dimensions but sacrifice analytic tractability, instead applying Markov Chain Monte Carlo to sample from the space.
    In applying any of these models, it is important to remember that subset-inclusion is strong property suited only to certain data-scientific  contexts.
    Particular problems arise when edges possess the semantics of interaction, such as in collaboration networks. 
    Suppose that authors $A$, $B$, and $C$ jointly coauthor a paper. 
    Using hypergraphs, we would represent this collaboration via an edge $(A,B,C)$. 
    In the setting of simplicial complexes, on the other hand, subset inclusion would also require us to include the edges $(A,B)$, $(B,C)$, $(A,C)$, $(A)$, $(B)$, and $(C)$. 
    This may be undesirable, since we are not guaranteed that $B$ and $C$, say, wrote a two-author paper. 
    While simplicial complex modeling may be useful in carefully-selected application areas, in other cases we may require more flexible configuration models defined on more general spaces of polyadic data structures. 
    We now formulate two such models. 

\section{Two Hypergraph Configuration Models} \label{sec:model_def}

	We now construct two configuration models for general hypergraphs. 
	Our models generalize the stub- and vertex-labeled dyadic configuration models described in the previous section \cite{Fosdick2018}. 
	
  We use Greek letters to denote random edges of $H$, and English letters to denote nonrandom tuples of nodes. 
	For example, the statement $\Delta = R$ describes the event that a random edge $\Delta$ has fixed location $R = (u_1,u_2,u_3,\ldots)$. 
	Let $\binom{R}{\ell}$ denote the set of all subsets of $R$ of size $\ell$. 
	Let $\mathbbm{I}$ give the indicator function of its argument. 
	We define the \emph{degree sequence} $\D \in \mathbb{Z}^n_+$ and \emph{dimension sequence} $\K \in \mathbb{Z}^m_+$ of a hypergraph $H$ componentwise by 
	\begin{align*}
	 	d_v = \sum_{e \in E}\mathbbm{I}(v \in e)\quad \text{and} \quad k_e = \sum_{v \in V}\mathbbm{I}(v \in e)\;.
	\end{align*}
	Let $\mathcal{H}_{\D, \K}^{\lcirclearrowright}$ and $\mathcal{H}_{\D, \K}$ denote the sets of hypergraphs with the specified degree and edge dimension sequences with and without degenerate hyperedges, respectively. 
  We say that the sequences $\D$ and $\K$ are \emph{configurable} of $\mathcal{H}_{\D,\K}\neq \emptyset$.   
	\begin{dfn}
		The \emph{vertex-labeled configuration model} $\eta_{\D,\K}$ is the uniform distribution on $\mathcal{H}_{\D,\K}$. 
	\end{dfn}
  The stub-labeled configuration model is defined similarly as in the dyadic case. 
  The map $g$ extends naturally to the space of hypergraphs. 
	\begin{dfn}
		Let 
		\begin{align*}
	      \Sigma = \biguplus_{v \in N}\left\{ v_1,\ldots,v_{d_v} \right\}
	    \end{align*}
	    be a multiset of \emph{stubs}. 
	    For each $v$, $d_v$ copies of $v$ appear in $\Sigma$. 
	    A \emph{stub-labeled hypergraph} $S$ has as its edge set $E$  a partition of $\Sigma$ in which each edge contains at most one stub for each node. 
	    Let $\mathcal{S}_{\D, \K}$ denote the set of all stub-labeled hypergraphs with given degree and dimension sequence, and let $\lambda_{\D, \K}$ be the uniform distribution on $\mathcal{S}_{\D, \K}$. 
      Then, the \emph{stub-labeled configuration model} is defined by $\mu_{\D, \K} = \lambda_{\D,\K} \circ g^{-1}$.  
	\end{dfn}

	We have now defined two hypergraph configuration models, generalizing the vertex- and stub-labeled models of \cite{Fosdick2018}.  
	The vertex-labeled configuration model is the entropy-maximizing distribution on $\mathcal{H}_{\D, \K}$ in the case that the identities of stubs are not meaningful, while the stub-labeled configuration model is the entropy-maximizing distribution when these identities are meaningful. 
  The same considerations discussed in \cite{Fosdick2018} (and briefly in the previous section) apply to the question of when to apply which null model. 

\section{Sampling} \label{sec:sampling}
  
  Stub-matching is a classical method for sampling from the stub-labeled dyadic configuration model \cite{Bollobas1980}, and extends naturally to the case of random hypergraphs. 
  Pseudcode for sampling from $\mu_{\D,\K}$ via stub-matching is provided by
  \Cref{alg:stub_matching}. 

  \begin{algorithm2e}[H]
        \DontPrintSemicolon
    \KwIn{Configurable $\D \in \mathbb{Z}_+^{n}$ and $\K \in \mathbb{Z}_+^{m}$.}
    \textbf{Initialization:}
    $j \gets 1,\; S \gets \emptyset,\;\Sigma  \gets \biguplus_{v \in V}\left\{ v_1,\ldots,v_{d_v} \right\}$\;
    \For{$j = 1,\ldots,m$}{
        $R \gets \binom{W}{k_j}$\;
      $W \gets W \setminus R$\;
      $S \gets S \cup \{R\}$\; 
     }
     \KwOut{$S$}
     \caption{Hypergraph Stub-Matching} \label{alg:stub_matching}
  \end{algorithm2e}
  Since any stub-labeled graph $S$ is as likely as any other \Cref{alg:stub_matching}, the output, conditioned on nondegeneracy, is distributed according to $\mu_{\D,\K}$. 
  As in the dyadic setting, there is nonzero probability for the output of stub-matching to produce a degenerate hypergraph. 
  This probability will generally be large in the presence of highly heterogeneous node degrees -- a common phenomenon in empirical data.  
  Many iterations of \Cref{alg:stub_matching} may therefore be necessary in order to generate a single valid sample. 
  Because of this, pure stub-matching is often not a practical method for generating random hypergraphs. 
  That said, the stub-matching algorithm is often useful in proofs involving $\mu_{\D,\K}$.  

	For practical sampling, we consider a Markov Chain Monte Carlo approach, in which we use successive, small alterations to the edge-set $E$ in order to systematically explore the space $\mathcal{H}_{\D,\K}$. 
	Our approach fits within a large class of edge-swap Markov chains \cite{Fosdick2018,verhelst2008efficient,artzy2005generating,viger2005efficient,mckay1990uniform,jerrum1990fast,carstens2015proof,strona2014fast,blitzstein2011sequential,del2010efficient,amanatidis2015graphic,greenhill2014switch,kannan1999simple} used for sampling a wide variety of random structures. 

	\begin{dfn}[Pairwise Reshuffle]
		Let $S \in \mathcal{S}_{\D,\K}$, and $\Delta,\Gamma \in S$.
		A \emph{pairwise reshuffle} $b(\Delta, \Gamma|S)$ of $\Delta$ and $\Gamma$ is a sample from the conditional distribution $\mu(\cdot|E\setminus \{\Delta, \Gamma\})$.
		Depending on context, we will regard a pairwise reshuffle as either a random map on stub-labeled hypergraphs or on pairs of hyperedges.
	\end{dfn}

	\begin{lm} \label{cor:reshuffle_properties}
      Let $S \in \mathcal{S}$.
      Let $b(\Delta, \Gamma|H) = (\Delta', \Gamma')$ be a pairwise reshuffle which results in $S' \in \mathcal{S}$.
      Then,
      \begin{enumerate}
        \item The degree and dimension sequences are preserved: $\mathrm{deg}(S) = \mathrm{deg}(S')$ and $\mathrm{dim}(S) = \mathrm{dim}(S')$.
        \item We have $\Delta' \cap \Gamma' = \Delta \cap \Gamma$.
        \item Any given realization of $b$ occurs with probability 
        \begin{align} \label{eq:q_S}
            q_\mu(\Delta, \Gamma) = 2^{-\abs{\Delta \cap \Gamma}}\binom{\abs{\Delta}+\abs{\Gamma}-2\abs{\Delta\cap\Gamma}}{\abs{\Delta}-\abs{\Delta\cap\Gamma}}^{-1}.
        \end{align}
      \end{enumerate}
    \end{lm}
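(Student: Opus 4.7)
The plan is to reduce the reshuffle to a uniform random partition of the stub pool $\Delta \uplus \Gamma$ into two edges of the prescribed sizes, and then verify (1)--(3) by inspection and direct enumeration. Since $\mu = \mu_{\D,\K}$ arises from the uniform distribution $\lambda_{\D,\K}$ on $\mathcal{S}_{\D,\K}$, the conditional $\mu(\cdot \mid E\setminus\{\Delta,\Gamma\})$ is uniform over its support: the valid completions are exactly the partitions of the stubs released by removing $\Delta$ and $\Gamma$ into two new edges of sizes $|\Delta|$ and $|\Gamma|$. Because parallel edges are permitted in $\mathcal{H}$, the only constraint beyond matching the two sizes is that neither new edge be degenerate.

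Part (1) is then immediate: every sample from $\mu(\cdot \mid E\setminus\{\Delta,\Gamma\})$ lies in $\mathcal{S}_{\D,\K}$ and thus inherits the sequences $\D$ and $\K$. For Part (2), write $c = |\Delta \cap \Gamma|$. Each $v \in \Delta \cap \Gamma$ contributes exactly two stubs to the pool $\Delta \uplus \Gamma$, one from each parent edge (using the non-degeneracy of $\Delta$ and $\Gamma$). Placing both stubs in the same new edge would render it degenerate and thus push the configuration outside $\mathcal{S}$, so the two $v$-stubs must split, forcing $v \in \Delta' \cap \Gamma'$. Conversely, any $v \in \Delta' \cap \Gamma'$ requires two $v$-stubs in $\Delta \uplus \Gamma$, whence $v \in \Delta \cap \Gamma$. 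Hence $\Delta' \cap \Gamma' = \Delta \cap \Gamma$.

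For Part (3) I would enumerate the valid completions directly. Each of the $c$ intersection vertices offers two choices for which of its two labeled stubs goes to $\Delta'$ versus $\Gamma'$, contributing a factor of $2^c$. The remaining $|\Delta| + |\Gamma| - 2c$ stubs, one per non-intersection vertex, may be freely distributed, and $\Delta'$ must receive exactly $|\Delta| - c$ of them to reach its prescribed size; this contributes the binomial factor $\binom{|\Delta|+|\Gamma|-2c}{|\Delta|-c}$. Uniformity of the conditional then gives $q_\mu$ as the reciprocal of the product. The point that demands the most care is confirming that non-degeneracy of the new pair is the only binding constraint, i.e., that no interaction with edges in $E \setminus \{\Delta, \Gamma\}$ can exclude a candidate completion; this is precisely where the liberality of $\mathcal{H}$ (permitting parallel edges) makes the count factor cleanly, and it is the step I would verify most carefully before concluding.
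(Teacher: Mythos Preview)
Your proposal is correct and follows essentially the same approach as the paper: both reduce the reshuffle to a uniform choice over nondegenerate partitions of the stub pool $\Delta \uplus \Gamma$ into edges of sizes $|\Delta|$ and $|\Gamma|$, and both count these partitions via the factor $2^{c}$ for the intersection stubs times the binomial for the remaining stubs. If anything, your derivation of part~(2) is slightly more explicit than the paper's, which presents a mechanistic procedure that places one $v$-stub in each of $\Delta'$ and $\Gamma'$ and then asserts the intersection is preserved ``by construction''; you instead derive this directly from the nondegeneracy constraint, which is the cleaner justification. Your closing caveat about parallel edges is also apt and matches the paper's implicit use of that freedom.
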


    \begin{proof}
      A pairwise reshuffle may be performed via the following sequence, which is an alternative description of the final two iterations (conditioning on nondegeneracy).  
      \begin{enumerate}
        \item Delete $\Delta$ and $\Gamma$ from $E$.
        \item Construct $\Delta'$ and $\Gamma'$ as (initially empty) node sets.
        \item For each node $v \in \Delta \cap \Gamma$, add a $v$-stub to both $\Delta'$ and $\Gamma'$.
        \item From the remaining stubs, select $\abs{\Delta\setminus\Gamma}$ stubs u.a.r. and add them to $\Delta'$.
        Add the remainder to $\Gamma'$.
        \item Add $\Delta'$ and $\Gamma'$ to $E$.
      \end{enumerate}
      Each node begins with the same number of edges as it started, so degrees are preserved.
      Next, by construction, $\abs{\Delta'} = \abs{\Delta \cap \Gamma} + \abs{\Delta - \Gamma} = \abs{\Delta}$, and similarly $\abs{\Gamma'} = \abs{\Gamma}$. 
      The edge dimension sequence is thus also preserved.
      
      Finally, by construction, step 2 above preserves the intersection $\Delta \cap \Gamma$.
      There are $2^{\abs{\Delta \cap \Gamma}}$ ways to assign stubs to this intersection. 
      There are a total of $\abs{\Delta} + \abs{\Gamma} - 2\abs{\Delta \cap \Gamma}$ remaining stubs, and of these one must choose $\abs{\Delta}-\abs{\Delta\cap\Gamma}$ to be placed in $\Delta$.
      We infer that any given pairwise reshuffle is realized with probability given by \Cref{eq:q_S}, as was to be shown. 
    \end{proof}

    We now define a transition kernel of a first-order Markov chain on the space $\mathcal{S_{\D,\K}}$.
    Write $S \sim_{\Delta, \Gamma} S'$ if there exists a pairwise shuffle $b$ such that $b(\Delta, \Gamma|S) = S'$.
    Note that, since each element of each edge has a distinct label in $\mathcal{S}_{\D,\K}$, for any $S$ and $S'$ there is at most one pair $(\Delta,\Gamma)$ such that $S\sim_{\Delta,\Gamma}S'$.
    If no such pair exists, we write $S \not\sim S'$. 
    Then, let
    \begin{align}
      \tilde{p}_\mu(S'|S) =
      \begin{cases}
          \binom{m}{2}^{-1}q_\mu(\Delta, \Gamma) &\quad
           S \sim_{\Delta, \Gamma} S' \\
        0 &\quad S \not\sim S'\;,
      \end{cases} \label{eq:stub_kernel}
    \end{align}
    where $q_\mu(\Delta,\Gamma)$ is the number of distinct possible shuffles realizable from $\Delta$ and $\Gamma$; an explicit expression is given in the SI.  
    To sample from $\tilde{p}_\mu(\cdot|S)$, it suffices to sample two uniformly random edges from $E$ and perform a reshuffle.
    The prefactor $\binom{m}{2}^{-1}$ gives the probability that any two given edges are chosen. 

    Th sequence $\{S_t\}$ is Markovian by construction.
    The following lemma and its corollary ensure that the sequence $\{H_t\} = \{g(S_t)\}$ is also a Markov chain. 
    \begin{lm} \label{lm:labeling}
        Let $H,H' \in \mathcal{H}$. 
        Suppose that $S_1,S_2 \in g^{-1}(H)$ and $S_1',S_2' \in g^{-1}(H')$. 
        Then, $\tilde{p}_{\mu}(S_1'|S_1) = \tilde{p}_{\mu}(S_2'|S_2)$\;.
    \end{lm}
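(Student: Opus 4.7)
My plan is to exploit the natural action of the group $G = \prod_{v \in V} \mathfrak{S}_{d_v}$ that permutes stubs independently within each vertex. This action on $\mathcal{S}_{\D,\K}$ leaves the projection $g$ invariant and acts transitively on each fiber $g^{-1}(H)$.

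The first step is to establish $G$-equivariance of the transition kernel: $\tilde p_\mu(\sigma S' \mid \sigma S) = \tilde p_\mu(S' \mid S)$ for every $\sigma \in G$ and every $S, S' \in \mathcal{S}_{\D,\K}$. Two observations drive this. The pairwise reshuffle procedure constructed in the proof of \Cref{cor:reshuffle_properties} treats stubs symbolically, so $S \sim_{\Delta, \Gamma} S'$ if and only if $\sigma S \sim_{\sigma \Delta, \sigma \Gamma} \sigma S'$. Moreover, the weight $q_\mu(\Delta, \Gamma)$ in \eqref{eq:q_S} depends only on $\abs{\Delta}$, $\abs{\Gamma}$, and $\abs{\Delta \cap \Gamma}$, each of which is $G$-invariant, and the prefactor $\binom{m}{2}^{-1}$ carries no stub information.

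The second step is to combine equivariance with fiber-transitivity. Given $S_1, S_2 \in g^{-1}(H)$ and $S_1', S_2' \in g^{-1}(H')$, I would construct $\sigma \in G$ that simultaneously satisfies $\sigma S_1 = S_2$ and $\sigma S_1' = S_2'$; a single application of equivariance then yields $\tilde p_\mu(S_1' \mid S_1) = \tilde p_\mu(\sigma S_1' \mid \sigma S_1) = \tilde p_\mu(S_2' \mid S_2)$. Because $G$ acts independently on stubs at each vertex, the construction of $\sigma$ proceeds vertex by vertex: at each $v \in V$, I would find $\sigma_v \in \mathfrak{S}_{d_v}$ that sends the $v$-stub assignment of $S_1$ to that of $S_2$ and simultaneously sends the $v$-stub assignment of $S_1'$ to that of $S_2'$.

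The main obstacle is precisely the existence of $\sigma_v$ at each vertex. The set of permutations satisfying the first constraint is a coset of the stabilizer of the $v$-stubs in $S_1$, and the second constraint cuts out another coset; compatibility requires these cosets to have nonempty intersection. Because $S_1, S_2$ and $S_1', S_2'$ project to the same hypergraphs $H$ and $H'$, the stub assignments are constrained in compatible ways, and I expect the intersection to be nonempty in every relevant case. Verifying this combinatorial compatibility — in particular for vertices simultaneously incident to the reshuffled edges in both $H$ and $H'$, and accounting for parallel edges in either hypergraph — is the technical heart of the proof.
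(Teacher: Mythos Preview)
Your worry about the ``main obstacle'' is well-founded: the simultaneous $\sigma$ need not exist. Take $H=H'$ consisting of two parallel copies of $\{a,b\}$ and two parallel copies of $\{c,d\}$; let
\[
S_1=\bigl\{\{a_1,b_1\},\{a_2,b_2\},\{c_1,d_1\},\{c_2,d_2\}\bigr\},\qquad
S_2=\bigl\{\{a_1,b_2\},\{a_2,b_1\},\{c_1,d_2\},\{c_2,d_1\}\bigr\},
\]
and choose $S_1'=\bigl\{\{a_1,b_2\},\{a_2,b_1\},\{c_1,d_1\},\{c_2,d_2\}\bigr\}$ and $S_2'=S_1$. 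All four lie in $g^{-1}(H)$. Then $S_1$ and $S_1'$ differ in exactly two edges, so $\tilde p_\mu(S_1'\mid S_1)>0$; but $S_2$ and $S_2'=S_1$ differ in all four edges, so no single pairwise reshuffle connects them and $\tilde p_\mu(S_2'\mid S_2)=0$. Since your equivariance step is correct, this example shows not only that no simultaneous $\sigma$ exists here, but that the lemma as literally stated fails in the presence of parallel edges.

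The paper's short argument asserts that $\tilde p_\mu(\cdot\mid S)$ depends only on edge sizes and intersections; this is true of the value $q_\mu(\Delta,\Gamma)$ when $S\sim_{\Delta,\Gamma}S'$, but does not address \emph{whether} $S\sim S'$, and so shares the same gap. What actually holds --- and is all that \Cref{cor:stub_equilibrium} and the later arguments need --- is the fiber-summed statement
\[
\sum_{S'\in g^{-1}(H')}\tilde p_\mu(S'\mid S_1)=\sum_{S'\in g^{-1}(H')}\tilde p_\mu(S'\mid S_2)\;.
\]
This follows immediately from your equivariance step: pick any $\sigma\in G$ with $\sigma S_1=S_2$ (a single constraint, so fiber-transitivity supplies it), note that $\sigma$ bijects $g^{-1}(H')$ onto itself, and reindex the sum. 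Your group-action framework is therefore the right one; you should abandon the attempt to match individual fiber elements and pass directly to the fiber sums.
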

    \begin{proof}
        The objects $S_1$ and $S_2$ may each be considered arbitrary stub-labelings of part-edges in $H$. 
        Similarly, $S'_1$ and $S'_2$ are each arbitrary labelings of part-edges in $H'$. 
        However, by \Cref{eq:stub_kernel}, $\tilde{p}_\mu(\cdot|S)$ depends only on the sizes of edges and their intersections in $H$, not their labels. 
    \end{proof}

    \begin{cor} \label{cor:stub_equilibrium}
        The process $\{H_t\} = \{g(S_t)\}$ on $\mathcal{H}_{\D,\K}$ is a Markov chain. 
    \end{cor}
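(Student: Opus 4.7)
My plan is to invoke the classical lumping criterion for functions of Markov chains. Specifically, if $\{S_t\}$ is Markov on $\mathcal{S}_{\D,\K}$ and
\[
r(H' \mid S) \;=\; \sum_{S' \in g^{-1}(H')}\tilde{p}_\mu(S' \mid S)
\]
depends on $S\in g^{-1}(H)$ only through $H=g(S)$ for every $H, H' \in \mathcal{H}_{\D,\K}$, then $\{H_t\}=\{g(S_t)\}$ is itself Markov, with transition kernel $p_\mu(H'\mid H) = r(H'\mid S)$ for any representative $S\in g^{-1}(H)$. So the task reduces to verifying this fibre-constancy condition and then discharging the Markov identity.

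I would verify fibre-constancy directly from \Cref{lm:labeling}. Specializing that lemma with $S_2 = S_1$ shows that $\tilde{p}_\mu(S'\mid S)$ is constant as $S'$ ranges over $g^{-1}(H')$ for any fixed $S \in g^{-1}(H)$; specializing with $S_2' = S_1'$ shows it is constant as $S$ ranges over $g^{-1}(H)$ for any fixed $S'$. Together these give a single value $c(H,H')$ with $\tilde{p}_\mu(S'\mid S) = c(H,H')$ for all $(S,S')\in g^{-1}(H)\times g^{-1}(H')$, so $r(H'\mid S) = |g^{-1}(H')|\,c(H,H')$, which is independent of the choice of $S\in g^{-1}(H)$, as required.

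To finish, I would condition on an arbitrary hypergraph history $H_0,\ldots,H_t$ and use the tower property together with the Markov property of $\{S_t\}$ to write
\[
P(H_{t+1} = H' \mid H_0,\ldots,H_t) \;=\; \mathbb{E}\bigl[r(H'\mid S_t)\,\big|\,H_0,\ldots,H_t\bigr] \;=\; |g^{-1}(H')|\,c(H_t,H'),
\]
where the last equality uses fibre-constancy to pull $r(H'\mid S_t)$ out of the conditional expectation. Since the right-hand side depends only on $H_t$ and $H'$, the Markov property for $\{H_t\}$ follows. The main subtlety, and really the only obstacle, is the bookkeeping around coarser versus finer $\sigma$-algebras: the hypergraph history generates a strictly coarser filtration than the stub history, so one must justify the interchange of conditioning and summation. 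This is exactly what fibre-constancy buys us for free, since $r(H'\mid S_t)$ is a deterministic function of $H_t$ and therefore measurable with respect to the coarser filtration. No extra distributional information about $S_t$ given the hypergraph past is needed, and the restriction to $\mathcal{H}_{\D,\K}\subset \mathcal{H}^{\lcirclearrowright}_{\D,\K}$ is consistent with the hypotheses of \Cref{lm:labeling} and the nondegeneracy conditioning built into the sampler.
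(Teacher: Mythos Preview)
Your proposal is correct and follows essentially the same approach as the paper: both rely on \Cref{lm:labeling} to establish that the fibre-summed transition probability $r(H'\mid S)$ depends only on $g(S)$, which is precisely the lumpability condition. The paper's proof is terser and slightly informal (it asserts a ``mechanistic construction'' of $H_t$ from $H_{t-1}$ and appeals to \Cref{lm:labeling} without writing out the tower-property step), whereas you make the lumping criterion and the $\sigma$-algebra bookkeeping explicit; the content is the same.
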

    \begin{proof}
        Markovianity of $\{H_t\}$  follows from \Cref{lm:labeling}. 
        Indeed, we can construct $H_t$ mechanistically from $H_{t-1}$ by choosing $S_{t-1}\in g^{-1}(H_{t-1})$, setting $S_t \sim \tilde{p}_\mu(\cdot|S_{t-1})$, and then letting $H_t = g(S_{t-1})$. 
        \Cref{lm:labeling} ensures that the distribution of $H_t$ depends only  on $H_{t-1}$, and not on the choices of $S_{t-1}$ and $S_{t}$.
	\end{proof}

    \begin{thm}\label{thm:stub_MH}
      The Markov chain $\{S_t\}$ on $\mathcal{S}_{\D,\K}$ defined by the kernel $\tilde{p}_\mu$ is  irreducible and reversible with respect to $\lambda_{\D,\K}$, the uniform distribution on $\mathcal{S}_{\D,\K}$.
      If in addition at least two entries of $\mathbf{k}$ are two or larger, $\{S_t\}$ is also aperiodic. 
      In this case, $\lambda_{\D,\K}$ is the equilibrium distribution of $\{S_t\}$. 
      Furthermore, $\mu_{\D,\K}$ is the equilibrium distribution of the process $\{H_t\} = \{g(S_t)\}$. 
    \end{thm}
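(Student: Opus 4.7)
The plan is to verify the four assertions — reversibility, irreducibility, aperiodicity, and identification of the equilibria — in sequence. Reversibility and the identification of the equilibria are essentially formal given the preceding lemma and corollary, aperiodicity is a one-line observation, and irreducibility is the substantive combinatorial step.

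For \textbf{reversibility}, since $\lambda_{\D,\K}$ is uniform on $\mathcal{S}_{\D,\K}$, detailed balance reduces to showing $\tilde{p}_\mu(S'\mid S) = \tilde{p}_\mu(S\mid S')$. The case $S \not\sim S'$ is trivial; otherwise $S \sim_{\Delta,\Gamma} S'$ for a unique pair, and the reshuffled pair $(\Delta',\Gamma') \subset S'$ is the unique pair satisfying $S' \sim_{\Delta',\Gamma'} S$. The pairwise-reshuffle lemma ensures $|\Delta'|=|\Delta|$, $|\Gamma'|=|\Gamma|$, and $\Delta'\cap\Gamma' = \Delta\cap\Gamma$, which makes $q_\mu(\Delta',\Gamma') = q_\mu(\Delta,\Gamma)$; combined with the symmetric prefactor $\binom{m}{2}^{-1}$ this gives the detailed balance equation. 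For \textbf{aperiodicity}, fix any $S$ and let $\Delta_i, \Delta_j \in S$ be two edges with $k_i, k_j \geq 2$. Among the equally-likely outcomes of a reshuffle of $\Delta_i$ and $\Delta_j$ enumerated in the pairwise-reshuffle lemma is the identity (every stub returns to its original edge), so $\tilde{p}_\mu(S\mid S) \geq \binom{m}{2}^{-1}q_\mu(\Delta_i,\Delta_j) > 0$, which forces period $1$.

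The main obstacle is \textbf{irreducibility}. I would fix a target $S^* \in \mathcal{S}_{\D,\K}$ and proceed by induction on a potential function $\Phi(S)$ counting edges of $S$ that differ from their counterparts in $S^*$ under an optimal matching by edge size. When $\Phi(S) > 0$, the plan is to pick a target edge $\Delta^* \in S^* \setminus S$, choose an edge $\Delta \in S$ of the same size maximizing $|\Delta \cap \Delta^*|$, and then successively reshuffle $\Delta$ against edges of $S$ containing the missing stubs of $\Delta^* \setminus \Delta$. Each such reshuffle can be chosen to import at least one missing stub into $\Delta$, so after finitely many steps $\Delta$ coincides with $\Delta^*$; the induction hypothesis is then applied to the remaining edges. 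The delicate point — the part that I expect to be the hardest to make rigorous — is that a naive swap may attempt to insert a stub for a node already present in the receiving edge, producing a degenerate edge that lies outside $\mathcal{S}_{\D,\K}$. The plan to handle this is a three-edge detour: route the swap through an auxiliary edge, executing two reshuffles that together effect the desired rearrangement while avoiding any intermediate degeneracy. Making the choice of detour precise and verifying termination is where the combinatorial work lives.

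Once irreducibility, aperiodicity, and reversibility with respect to $\lambda_{\D,\K}$ are established, standard finite-state Markov chain theory identifies $\lambda_{\D,\K}$ as the unique equilibrium of $\{S_t\}$. Since \Cref{cor:stub_equilibrium} establishes that $\{H_t\} = \{g(S_t)\}$ is Markov and its time-$t$ marginal is the $g$-pushforward of that of $S_t$, the equilibrium of $\{H_t\}$ is $\lambda_{\D,\K}\circ g^{-1} = \mu_{\D,\K}$ by the definition of the stub-labeled configuration model.
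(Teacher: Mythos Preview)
Your proposal is sound and tracks the paper closely on reversibility, irreducibility, and the identification of equilibria. The reversibility argument is identical, and your irreducibility plan---pick a target edge, then successively reshuffle a same-size edge against the edges holding its missing stubs to reduce a discrepancy potential---is the same inductive scheme the paper uses. You are in fact more careful than the paper on one point: you flag the possibility that an intermediate reshuffle could create a degenerate edge and propose a three-edge detour, whereas the paper's proof simply asserts the desired reshuffle outcome without checking non-degeneracy of the intermediate configurations.

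The one genuine methodological difference is aperiodicity. The paper does \emph{not} use a self-loop; instead it exhibits supported cycles of lengths $2$ and $3$ (the $3$-cycle is an explicit sequence of swaps on two edges each of size at least two, which is where the hypothesis on $\mathbf{k}$ enters) and invokes coprimality. Your argument---that the identity is always one of the equally likely reshuffle outcomes enumerated in \Cref{cor:reshuffle_properties}, so $\tilde{p}_\mu(S\mid S)>0$ at every state---is shorter and correct given the paper's operational description of sampling from $\tilde{p}_\mu$. Note, though, that the displayed formula \eqref{eq:stub_kernel} is written under the standing assumption that at most one pair $(\Delta,\Gamma)$ witnesses $S\sim_{\Delta,\Gamma}S'$, which fails when $S'=S$; your self-loop argument is therefore really an appeal to the mechanistic sampling description rather than to \eqref{eq:stub_kernel} literally. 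If you pursue this route, you should say so explicitly. The paper's coprime-cycle argument avoids that wrinkle at the cost of a few extra lines.
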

    \begin{proof}
    We will first show reversibility with respect to $\lambda_{\D,\K}$.
      Fix $S$. 
      Let $S\sim_{\Delta,\Gamma}S'$ and $S' \sim_{\Delta', \Gamma'}S$.  
      In this case, we have $\Delta', \Gamma' = b(\Delta,\Gamma|S)$. 
      Then, 
      \begin{align*}
        \tilde{p}_\mu(S'|S) &= \binom{m}{2}^{-1}q_\mu(\Delta, \Gamma)   
        = \binom{m}{2}^{-1}q_\mu(\Delta', \Gamma')  
        = \tilde{p}_\mu(S|S')\;, 
      \end{align*}
      as required. 
      The second equality follows from \Cref{cor:reshuffle_properties}, since $q(\Delta,\Gamma)$ depends only on $\abs{\Delta}$, $\abs{\Gamma}$, and $\abs{\Delta \cap \Gamma}$.

      Our proof approach for irreducibility generalizes that of \cite{Fosdick2018}.
      We need to construct a path of nonzero probability between two arbitrary elements $S_1$ and $S_2$ of $\mathcal{S}$.
      Let $E_1$ and $E_2$ be the edge-sets of $S_1$ and $S_2$, respectively.
      We first describe a procedure for generating a new stub-labeled hypergraph $S_3$ such that $\abs{E_2\setminus E_3} < \abs{E_2\setminus E_1}$.
      Since $E_1 \neq E_2$ and $\abs{E_1} = \abs{E_2}$, we may pick $\Delta = \{\delta_1,\ldots,\delta_\ell\} \in E_2 \setminus E_1$.
      Note that, since $\Delta \notin E_1$ and the edge dimension sequences must agree, there exists an edge $\Psi \in E_1 \setminus E_2$ such that $\abs{\Psi} = \abs{\Delta} = \ell$.
      Now, for each $i$, since $\Delta \notin E_1$, $\delta_i$ belongs to a different edge (call it $\Gamma_i$) in $E_1$. 
      Note that we may have $\Gamma_i = \Gamma_{i'}$ in case $\delta_i$ and $\delta_{i'}$ belong to the same hyperedge in $E_1$. 
      Suppose we have $j \leq \ell$ such edges. 
      Since $\delta_i$ is a stub, $\delta_i$ can belong to only one edge in each hypergraph, and therefore $\Gamma_k \notin E_2$ for each $k = 1,\ldots,j$.
      For each $k  = 1,\ldots,j$, let $\left(\Psi_k, \Gamma_k'\right) = b_k(\Psi_{k-1},\Gamma_k)$, where $b_{k}$ assigns all elements of the set $\Delta \cap \left(\Psi_{k-1} \cup \Gamma_{k-1}\right)$ to $\Psi_k$ and uniformly distributes the remainder.
      Since $\Delta \subseteq \left(\bigcup_{k = 1}^{j} \Gamma_k\right)$ by construction, by the end of this procedure we have $\Psi_{j} = \Delta$.
      Call the resulting stub-labeled hypergraph $S_3$ with edge set $E_3$.
      Since we have only modified the edges $\{\Gamma_k\}$ and $\Psi$, which are elements of $E_1\setminus E_2$, we have not added any edges to the set $E_1\setminus E_2$, but we have removed one, namely $\Psi$.
      We therefore have $\abs{E_2\setminus E_3} < \abs{E_2\setminus E_1}$, as desired.
      Applying this procedure inductively, we obtain a path of nonzero probability between $S_1$ and $S_2$, proving irreducibility. 
      
      To prove aperiodicity, we will construct supported cycles of length $2$ and $3$ in $\mathcal{S}$. 
      Since the lengths of these cycles are relatively prime, aperiodicity will follow. 
      To construct a cycle of length 2, pick two edges $\Delta$ and $\Gamma$ and any valid reshuffle $b:(\Delta,\Gamma) \mapsto (\Delta', \Gamma')$. 
      Then, $b^{-1}:(\Delta', \Gamma') \mapsto (\Delta,\Gamma)$ is also a valid reshuffle, and the sequence $(b, b^{-1})$ of transitions constitutes a supported cycle through $\mathcal{S}$ of length $2$. 
      To construct a cycle of length 3, choose two edges $\Delta$ and $\Gamma$ which each contain two or more nodes, writing $\Delta = \{\delta_1,\delta_2,\ldots\}$ and $\Gamma = \{\gamma_1, \gamma_2,\ldots\}$. 
      This is always possible by hypothesis. 
      Then, the following sequence of pairwise reshuffles constitutes a cycle of length 3: 
      \begin{align*}
        \{\delta_1,\delta_2,\ldots\}, \{\gamma_1, \gamma_2,\ldots\} &\mapsto \{\gamma_1,\delta_2,\ldots\}, \{\delta_1, \gamma_2,\ldots\} \\
        &\mapsto \{\gamma_2,\delta_2,\ldots\}, \{\delta_1, \gamma_1,\ldots\} \\
        &\mapsto \{\delta_1,\delta_2,\ldots\}, \{\gamma_1, \gamma_2,\ldots\}\;. 
      \end{align*}
      We have shown reversibility, irreducibility, and aperiodicity, completing the proof. 
      \end{proof}
   	A small modification enables sampling from the vertex-labeled model $\eta_{\D,\K}$.
    Let $m_{\Delta}$ give the number of edges parallel to edge $\Delta$ in hypergraph $H$, including $\Delta$ itself. 
    Define 
    \begin{align}
        a_\eta(S'|S) = 
        \begin{cases}
            \frac{2^{\abs{\Delta\cap\Gamma}}}{m_\Delta m_\Gamma} &\quad S \sim_{\Delta,\Gamma} S'\\ 
             0 &\quad \mathrm{otherwise.}
        \end{cases}
    \end{align}

    \begin{thm} \label{thm:vertex_MH}
        Let $\tilde{p}_\eta(S'|S) = a(S'|S)\tilde{p}_{\mu}(S'|S)$.
        Let $\{S_t\}$ be the Markov chain generated by $\tilde{p}_\eta$. 
        Then, the process $\{H_t\} = \{g(S_{t})\}$ is a Markov chain.
        Furthermore, $\{H_t\}$ is irreducible and reversible with respect to $\eta_{\D,\K}$. 
        If in addition $\K$ has at least two entries larger than $2$, $\{H_t\}$ is aperiodic. 
        In this case, $\eta_{\D,\K}$ is the equilibrium distribution of $\{H_t\}$. 
    \end{thm}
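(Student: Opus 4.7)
The approach is to view $\tilde{p}_\eta$ as a Metropolis-Hastings modification of the stub chain constructed in \Cref{thm:stub_MH}. The proposal kernel $\tilde{p}_\mu$ has uniform stationary measure $\lambda_{\D,\K}$ on $\mathcal{S}_{\D,\K}$, whose pushforward under $g$ is $\mu_{\D,\K}$ on $\mathcal{H}_{\D,\K}$; the factor $a_\eta(S'|S) = 2^{|\Delta\cap\Gamma|}/(m_\Delta m_\Gamma)$ plays the role of an acceptance ratio engineered to reweight the equilibrium from $\mu_{\D,\K}$ to $\eta_{\D,\K}$. I would first dispose of Markovianity of $\{H_t\}$ by noting that $a_\eta(S'|S)$ depends on $S, S'$ only through hypergraph-level data (the intersection size of the reshuffled edges and their multiplicities in $H = g(S)$), and likewise for $\tilde{p}_\mu$ by \Cref{lm:labeling}. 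The argument of \Cref{cor:stub_equilibrium} then applies verbatim.

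For reversibility, since $\eta_{\D,\K}$ is uniform on $\mathcal{H}_{\D,\K}$, detailed balance reduces to $p_\eta(H'|H) = p_\eta(H|H')$ for the pushforward kernel on $\mathcal{H}$. Because $a_\eta$ is constant on the fibre $g^{-1}(H')$, this factors as $p_\eta(H'|H) = a_\eta(H'|H)\, p_\mu(H'|H)$, where $p_\mu$ is the pushforward of $\tilde{p}_\mu$. From \Cref{thm:stub_MH}, $p_\mu$ satisfies $\mu_{\D,\K}$-detailed balance, which yields $p_\mu(H'|H)/p_\mu(H|H') = |g^{-1}(H')|/|g^{-1}(H)|$. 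The claim thus reduces to the algebraic identity
\[
\frac{a_\eta(H'|H)}{a_\eta(H|H')} = \frac{m_{\Delta'}(H')\, m_{\Gamma'}(H')}{m_\Delta(H)\, m_\Gamma(H)} = \frac{|g^{-1}(H)|}{|g^{-1}(H')|},
\]
where the factors $2^{|\Delta\cap\Gamma|}$ cancel by the intersection-preservation in \Cref{cor:reshuffle_properties}. Expanding the right-hand side via $|g^{-1}(H)| = \prod_v d_v!/\prod_\pi m_\pi(H)!$ (product over parallel-edge classes $\pi$ of $H$), the identity then follows by tracking how the multiplicities of $\Delta, \Gamma, \Delta', \Gamma'$ shift from $H$ to $H'$. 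In the generic case where all four patterns are distinct, the swap removes a copy of each of $\Delta, \Gamma$ and adds a copy of each of $\Delta', \Gamma'$, so the factorial ratio collapses to $(m_{\Delta'}(H)+1)(m_{\Gamma'}(H)+1)/(m_\Delta(H)\, m_\Gamma(H))$, matching the left-hand side since $m_{\Delta'}(H') = m_{\Delta'}(H)+1$ and similarly for $\Gamma'$.

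Irreducibility and aperiodicity transfer from \Cref{thm:stub_MH}. Because $a_\eta(S'|S) > 0$ for every valid reshuffle (both $m_\Delta$ and $m_\Gamma$ are positive integers for edges actually present in $H$), every positive-probability transition of $\tilde{p}_\mu$ remains positive under $\tilde{p}_\eta$. The path construction and the 2- and 3-cycle arguments from \Cref{thm:stub_MH} therefore carry through unchanged, giving aperiodicity under the hypothesis on $\K$. Existence and uniqueness of $\eta_{\D,\K}$ as the equilibrium then follow from standard Markov chain theory. The main obstacle is the case analysis behind the algebraic identity above: the delicate cases are when $\Delta$ and $\Gamma$ are parallel in $H$ (so $m_\Delta = m_\Gamma$ and the denominator contains $m_\Delta^2$ rather than two independent factors), when the reshuffle yields $\{\Delta', \Gamma'\} = \{\Delta, \Gamma\}$ (a self-transition in $\mathcal{H}$, for which both sides should equal one), and when $\Delta'$ or $\Gamma'$ already appears in $H$ with positive multiplicity. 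Each of these cases shifts the $m_\pi$ counts in a different pattern, and one must verify that the single denominator $m_\Delta m_\Gamma$ in $a_\eta$ faithfully tracks all of them.
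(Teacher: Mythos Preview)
Your approach is correct but takes a different route from the paper on the reversibility step. You cast the argument in Metropolis--Hastings form: factor $p_\eta = a_\eta \cdot p_\mu$, use $\mu$-detailed balance of $p_\mu$ to reduce to the identity $a_\eta(H'|H)/a_\eta(H|H') = |g^{-1}(H)|/|g^{-1}(H')|$, and then verify that identity via the fiber-size formula $|g^{-1}(H)| = \prod_v d_v!/\prod_\pi m_\pi(H)!$ together with a case analysis of how multiplicities shift under the swap. The paper instead avoids fiber sizes entirely: after pulling the constant $a_\eta$ out of the sum $\sum_{S'\in g^{-1}(H')} \tilde p_\mu(S'|S)$, it counts the nonzero summands directly---$m_\Delta m_\Gamma$ choices of a stub-labeled pair projecting to $(\Delta,\Gamma)$, times the $2^{|\Delta\cap\Gamma|}$ ways to distribute the intersection stubs---and observes that this count cancels against $a_\eta$, leaving $p_\eta(H'|H)$ equal to the single symmetric quantity $\tilde p_\mu(S'|S)$. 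Reversibility then drops out of \Cref{thm:stub_MH} without any separate combinatorial identity.

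What each buys: the paper's counting argument is shorter and sidesteps both the fiber-size formula and the case split you flag as the main obstacle, because the count of reachable $S'$ is what it is regardless of whether $\Delta,\Gamma,\Delta',\Gamma'$ collide. Your approach is more transparently a Metropolis--Hastings verification and would generalize more readily to other target measures on $\mathcal{H}_{\D,\K}$, but the price is exactly the delicate case analysis you identify (parallel $\Delta,\Gamma$; self-transitions; pre-existing copies of $\Delta'$ or $\Gamma'$ in $H$). That analysis does work out, so your proof is complete once those cases are checked, but the paper's route gets there with less bookkeeping. Your handling of Markovianity, irreducibility, and aperiodicity is essentially identical to the paper's.
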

    \begin{proof}
        Markovianity of $\{H_t\}$ follows from the same argument as \Cref{cor:stub_equilibrium}. 
        Irreducibility and aperiodicity follow from \Cref{thm:stub_MH}, since the state space $\mathcal{H}$ is a partition of $\mathcal{S}$ into equivalence classes induced by $g$.  
        It remains to demonstrate reversibility with respect to $\eta_{\D,\K}$. 
        Let $p_\eta$ be the transition kernel of $H_t$. 
        Fix $H$ and $H'$. 
        Fix $S \in g^{-1}(H)$ and $S^* \in g^{-1}(H')$. 
        Then, we can write
        \begin{align*}
            p_\eta(H'|H) &= \sum_{S' \in g^{-1}(H')}\tilde{p}_\eta(S'|S) \\ 
            &= \sum_{S' \in g^{-1}(H')}a(S'|S)\tilde{p}_{\mu}(S'|S) \\
            &= a(S^*|S)\sum_{S' \in g^{-1}(H')}\tilde{p}_{\mu}(S'|S)\;. 
        \end{align*}
        The expressions appearing in this calculation are independent of the specific choices of $S$ or $S^*$ following the same argument as in the proof of \Cref{lm:labeling}. 
        We now evaluate the sum in the third line. 
        The summand is nonzero if and only if $S \sim_{\Delta,\Gamma} S'$, in which case its value depends only on $\abs{\Delta}$, $\abs{\Gamma}$, and $\abs{\Delta \cap \Gamma}$. 
        We therefore count terms. 
        There are $2^{\abs{\Delta \cap \Gamma}}$ ways to arrange the intersection of $\Delta$ and $\Gamma$ in $\mathcal{S}$, and $m_{\Delta}m_{\Gamma}$ ways to choose two edges parallel to $\Delta$ and $\Gamma$ to reshuffle, all of which generate a distinct element of $g^{-1}(H')$. 
        The sum therefore possesses precisely $a(S^*|S)^{-1}$ terms.
        We  find that $p_{\eta}(H'|H) = \tilde{p}_\mu(S'|S)$ for any $S \in g^{-1}(H)$ and $S' \in g^{-1}(H')$. 
        Reversibility of $p_{\eta}$ thus follows from reversibility of $\tilde{p}_\mu$.
    \end{proof}

	\Cref{alg:MCMC} supplies pseudocode for sampling from the stub- and vertex-labeled hypergraph configuration models.

    \begin{algorithm2e}[H]
       \DontPrintSemicolon
        \caption{Markov Chain Monte Carlo for hypergraph configuration models}\label{alg:MCMC}
        \KwIn{$\mathbf{d}$, $\mathbf{k}$, target distribution $\nu \in \{\mu_{\D,\K}, \eta_{\D,\K}\}$, initial hypergraph $H_0 \in \mathcal{H}_{\D,\K}$, sample interval $h \in \mathbb{Z}_+$, desired sample size $s \in \mathbb{Z}_+$.}
        
        \textbf{Initialization:} $t \gets 0$, $H \gets H_0$\;
        \For{$t =1,2,\ldots, sh$}{
            sample $(\Delta,\Gamma)$ u.a.r. from $\binom{E_t}{2}$ \;
            $H' = b(\Delta, \Gamma|H_t)$ \;
            \uIf{$\mathrm{Uniform}([0,1]) \leq a_\nu(H'|H)$}{
             $H_{t} \gets H'$ \;
            }
            \Else
            {
            $H_t \gets H_{t-1}$
            }
        }
        \KwOut{$\{H_t \text{ such that } t|h\}$}
    \end{algorithm2e}

   	\Cref{thm:stub_MH,thm:vertex_MH} constitute a guarantee that, for sufficiently large sample intervals $h$, the hypergraphs sampled from \cref{alg:MCMC} will be asymptotically i.i.d. according to the desired distribution.
    Unfortunately, we are unaware of any mixing-time bounds for this class of Markov chain. 
    It is therefore possible in principle that the scaling in the mixing time as a function of system size is extremely poor, a result suggested by work on related classes of edge-swap Markov chains \cite{greenhill2011polynomial,greenhill2014switch}. 
    Our experience indicates, however, that sampling is possible for configuration models with hundreds of thousands of edges on personal computing equipment in practical time.

    \subsection{Connections to Random Bipartite Graphs}

    	As briefly mentioned in \cref{sec:lit_review}, a hypergraph $H = (V,E)$ corresponds in a natural way to a bipartite dyadic graph $B$. 
    	The graph $B$ consists of a node set $V \cup E$. 
    	An edge $(u,e)$ exists between $u \in V$ and $e \in E$ iff $u \in e$ (in $H$). 
    	In this setting, the degree of $u$ (in $H$) is equal to its degree in $B$, and the dimension of $e$ (in $H$) is similarly equal to its degree in $B$. 
    	Let $h$ be the function that assigns to each hypergraph its associated bipartite graph. 
    	When both nodes and edges are uniquely labeled, $h$ is a bijection. 
    	It follows that a probability measure $\nu$ on the space $\mathcal{B}_{\D, \K}$ of bipartite graphs with node degrees $\D$ and $\K$ induces a probability measure $\nu \circ h^{-1}$ on  $\mathcal{H}_{\D,\K}$. 
    	Several extant papers (e.g. \cite{Newman2001,Saracco2015}) use this equivalence to construct random models of polyadic data.       
    	While it is sometimes thought that bipartite randomization supplies a complete solution to null hypergraph sampling, we show in this section that the natural scope of the bipartite method is limited to stub-labeled models. 

    	We first define a bipartite, dyadic, configuration model. 
    	We define $\nu_{\D,\K}$ to be the measure on $\mathcal{B}_{\D,\K}$ obtained by performing stub-matching with the node-set $V \cup E$, conditioned on the events that (a) all edges have the form $(u,e)$ for $u\in V$ and $e \in E$, and (b) the bipartite graph is simple, without multi-edges or self-loops. 
      Note that conditioning on the event that $B$ is simple implies that the stub-labeled and vertex-labeled models are identical in this case. 
    	The work of Kannan et al. \cite{kannan1999simple} considers the problem of sampling from $\nu_{\D,\K}$ via bipartite edge-swaps. 
    	Such a swap maps $(u,e), (v,f) \mapsto (u,f), (v,e)$. 
    	By construction, such a swap preserves $\D$ and $\K$. 
    	The authors show that a Markov chain which performs successive, random bipartite edge-swaps (while avoiding ones that would lead to a non-simple bipartite graph) is ergodic and therefore sufficient to sample from $\nu_{\D,\K}$. 
    	Such a swap, which viewed in the space $\mathcal{H}_{\D,\K}$ amounts to swapping the edge memberships of nodes $u$ and $v$. 
    	Importantly, a sequence of such switches is special case of the pairwise reshuffle Markov chain on $\mathcal{S}_{\D,\K}$. 
    	This implies following relationship: 
	    \begin{prop} \label{thm:bipartite_equivalence}
        The configuration model on simple bipartite graphs is equivalent to the stub-labeled hypergraph configuration model, in the sense that
        $\mu_{\D,\K} = \nu_{\D,\K} \circ h^{-1}$.
	    \end{prop}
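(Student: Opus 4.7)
The plan is to compare the two measures pointwise on $\mathcal{H}_{\D,\K}$. Since both $\mu_{\D,\K}$ and $\nu_{\D,\K}\circ h^{-1}$ are probability measures on the finite set $\mathcal{H}_{\D,\K}$, it suffices to show that their values on an arbitrary $H \in \mathcal{H}_{\D,\K}$ are proportional, with proportionality constant depending only on $\D$ and $\K$.

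I would first unpack both sides as normalized preimage counts. Because $\lambda_{\D,\K}$ is uniform on $\mathcal{S}_{\D,\K}$, we have $\mu_{\D,\K}(H) = |g^{-1}(H)|/|\mathcal{S}_{\D,\K}|$. As the paper explicitly notes, conditioning the bipartite stub-matching on simplicity makes $\nu_{\D,\K}$ uniform on the simple elements of $\mathcal{B}_{\D,\K}$; since non-degeneracy of $H$ already forces every bipartite preimage of $H$ to be simple, $(\nu_{\D,\K}\circ h^{-1})(H) = |h^{-1}(H)|/|\mathcal{B}_{\D,\K}^{\mathrm{simple}}|$.

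The combinatorial heart is to count $|g^{-1}(H)|$ and $|h^{-1}(H)|$ by routing through the space of \emph{edge-labeled} hypergraphs, i.e., sequences $(e_1,\ldots,e_m)$ whose image multiset equals $E(H)$. A multinomial argument yields exactly $m!/\prod_\Delta m_\Delta!$ such orderings, where $m_\Delta$ is the multiplicity of a distinct hyperedge $\Delta$ in $H$; and since $h$ restricts to a bijection from edge-labeled non-degenerate hypergraphs onto the simple elements of $\mathcal{B}_{\D,\K}$, this is precisely $|h^{-1}(H)|$. For $g^{-1}(H)$, each edge-labeled representative of $H$ admits $\prod_v d_v!$ refinements to an edge-labeled stub-labeled hypergraph (for each node $v$, bijectively assign its $d_v$ distinct stubs to the $d_v$ edge-slots it occupies), and the forgetful map from edge-labeled stub-labeled hypergraphs to $\mathcal{S}_{\D,\K}$ is $m!$-to-$1$ because the $m$ stub-edges carry disjoint, hence distinguishing, stub-sets. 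Combining, $|g^{-1}(H)| = \prod_v d_v!/\prod_\Delta m_\Delta!$.

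Both preimage counts are therefore of the form $C_{\D,\K}/\prod_\Delta m_\Delta!$ for constants $C_{\D,\K}$ depending only on the input sequences, so $\mu_{\D,\K}(H)$ and $(\nu_{\D,\K}\circ h^{-1})(H)$ are proportional on $\mathcal{H}_{\D,\K}$ and normalization forces equality. The main delicacy I expect is precisely the $\prod_\Delta m_\Delta!$ correction: because $\mathcal{S}_{\D,\K}$ treats its edge collection as an unordered partition of $\Sigma$, permuting stub assignments across two parallel hyperedges (which are treated as distinct in the intermediate ordered spaces) produces the \emph{same} element of $\mathcal{S}_{\D,\K}$, and the analogous overcount on the bipartite side, where edge-node labels are forgotten when passing to $\mathcal{H}_{\D,\K}$, must be tracked symmetrically. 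Once the intermediate edge-labelings are in place, these two overcounts match identically and the proportionality becomes transparent.
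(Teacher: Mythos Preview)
Your approach is sound and, in fact, more explicit than what the paper offers: the paper does not prove the proposition formally but presents it as a consequence of the preceding observation that bipartite edge-swaps, transported to hypergraph space, are special cases of pairwise reshuffles on $\mathcal{S}_{\D,\K}$, so the two Markov chains share an equilibrium. Your direct preimage-counting argument is a genuinely different and more elementary route, avoiding any appeal to ergodicity.

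There is one slip in your execution, however. Your intermediate ``edge-labeled hypergraphs'' are \emph{arbitrary} orderings of $E(H)$, of which there are indeed $m!/\prod_\Delta m_\Delta!$. But such an ordering corresponds to a simple bipartite graph in $\mathcal{B}_{\D,\K}$ only when the $j$th edge has size $k_j$, since the edge-node $e_j$ in $\mathcal{B}_{\D,\K}$ carries the fixed degree $k_j$. Hence $h$ is \emph{not} a bijection from all edge-labeled hypergraphs onto $\mathcal{B}_{\D,\K}^{\mathrm{simple}}$; the correct count is $|h^{-1}(H)| = \prod_s M_s!/\prod_\Delta m_\Delta!$, where $M_s = |\{j:k_j=s\}|$. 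This does not damage your conclusion: the missing factor $\prod_s M_s!/m!$ depends only on $\K$, so both preimage sizes remain proportional to $1/\prod_\Delta m_\Delta!$ and normalization still forces equality. If you want the bijection statement to be literally true, restrict your intermediate space to $\K$-respecting orderings; the same multinomial bookkeeping then yields $|g^{-1}(H)| = \prod_v d_v!/\prod_\Delta m_\Delta!$ via a $\prod_s M_s!$-to-$1$ (rather than $m!$-to-$1$) forgetful map, and everything matches exactly.
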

	    \Cref{thm:bipartite_equivalence} makes precise the primary sense in which bipartite randomization provides an approach to random hypergraph modeling. 
	    This is a convenient result, since a dyadic edge-swap Markov chain on $B$ can be used to produce samples from $\mu_{\D,\K}$.
	    This equivalence may also be used to give alternative proofs of \Cref{thm:stub_MH}. 
	    However, as discussed in \cite{Fosdick2018}, many data sets in which we aim to apply null modeling are better represented by vertex-labeled null distributions. 
	    There is no obvious route for vertex-labeled sampling through bipartite random graphs. 
	    In particular, there is no analogue of \Cref{thm:bipartite_equivalence} for this case. 
      Thus, even though the work of \cite{kannan1999simple} treats vertex-labeled sampling from $\mathcal{B}_{\D,\K}$, this does not directly suffice for vertex-labeled sampling from $\mathcal{H}_{\D,\K}$. 
	    The reason is that sampling from the vertex-labeled measure $\eta_{\D,\K}$ requires adjusting for permutations of parallel hyperedges. 
	    When $H$ contains multiple hyperedges of dimension three or greater, it is necessary to track multiple node-edge incidence relations in order to check when hyperedges are parallel. 

	    It is possible to write down a version of \Cref{alg:MCMC} for vertex-labeled sampling in which the fundamental data structure is a bipartite graph rather than a hypergraph. 
	    However, the result would not, to the author's knowledge, correspond to any standard random bipartite graph model. 
	    Expressing both models directly on the space $\mathcal{H}_{\D,\K}$ of hypergraphs supports both conceptual clarity and a convenient formulation of MCMC for both stub- and vertex-labeled models.  
	    Incidentally, we note that this discussion constitutes another, separate setting in which adherence to dyadic methods can limit our data-analytical horizons.
      An exclusive focus on nulls realizable through bipartite methods obscures the possibility of vertex-labeled polyadic models.  

\section{Network Analysis with Random Hypergraphs} \label{sec:applications}

    We now illustrate the application of hypergraph configuration models through three simple network analyses. 
    We first study triadic closure in polyadic networks, finding that the use of polyadic nulls can generate directionally different, interpretable study findings when compared to dyadic nulls.  
    We then turn to degree-assortativity, defining and testing three distinct measures of association via polyadic data representations and randomizations.
    Finally, we study the tendency of edges to intersect on multiple vertices in the \texttt{email-Enron} data set, finding using simulation and analytical methods that large intersections occur at much higher rates than would be expected by random chance. 
    Collectively, these cases illustrate the use of polyadic methods to define and analyze richer measures of network structure, and the use of polyadic nulls in interpreting the results. 
    
    The data sets for case study were gathered, cleaned, and generously made public by the authors of \cite{Benson2018}. 
    In certain experiments, data were temporally filtered in order to reduce their size; these cases of have been explicitly noted in the text and the filtering procedure described in \Cref{sec:data_prep}.
    Importantly, in no case was the filtering operation motivated by the expense of Monte Carlo sampling; rather, the bottlenecks were standard, expensive computations such as triangle-counting in dyadic graphs. 

\subsection{Triadic Closure}

    Triadic closure refers to the phenomenon that, in many networks, if two nodes $u$ and $v$ interact with a third node $w$, then it is statistically likely that $u$ and $v$ also interact with each other.
    Studies such as \cite{Strogatz1998b, Newman2001b,Newman2001} observed triadic closure in many empirical networks, and highlighted the fact that dyadic configuration models tend to be unable to reproduce this behavior.    
    Traditionally, triadic closure is measured by a ratio of the number of triangles (closed cycles on three nodes) that are present in the graph, compared to the number of ``wedges'' (subgraphs on three nodes in which two edges are present).\footnote{Recent measures have been developed for higher-order notions of clustering on larger subgraphs; see \cite{Yin2018a}.} 
    Local and global variants of this ratio have been proposed. 
    We follow the choice of \cite{Strogatz1998b} and work with the \emph{average local clustering coefficient}. 
    Let $T_v$ denote the number of triangles incident on $v$, and $W_v$ the number of wedges. 
    Note that $W_v = \binom{d_v}{2}$. 
    The average local clustering coefficient is 
    \begin{align}
        \bar{C} = \frac{1}{\abs{N}}\sum_{v \in N}\frac{T_v}{W_v}\;. 
    \end{align}
    It is direct to show \cite{Newman2010} that, in dyadic configuration models and under mild sparsity assumptions, $\bar{C}$ decays to zero as $n$ grows large. 
    
    The average local clustering coefficient $\bar{C}$ is a natively dyadic metric, in the sense that ``wedges'' and ``triangles'' are defined explicitly in terms of 2-edges. 
    To compute $\bar{C}$ in polyadic data, it is therefore necessary to project a hypergraph down to a dyadic graph. 
    In the context of hypothesis-testing, there is some subtlety involved in the choice of when to do this. 
    One method is to project first and then randomize via a dyadic null model. 
    This is the most common historical approach, used for example in \cite{Newman2001}.
    Alternatively, one may randomize via polyadic nulls prior to projection. 
    This approach has the effect of preserving clustering induced by polyadic edges, since an edge of dimension $k$ contains $3\binom{k}{3}$ wedges and $3\binom{k}{3}$ ordered triangles. 
    
    \begin{table}
    	\centering
    	\ra{1.3}
    	\begin{tabular}{@{}llllllll@{}}
    		\multicolumn{1}{c}{}& \multicolumn{1}{c}{}&\phantom{a} & \multicolumn{2}{c}{Hypergraph} &\phantom{a}& \multicolumn{2}{c}{Projected}\\ 
    		\cmidrule{4-5}\cmidrule{7-8}
    		&$\bar{C}$& & Vertex & Stub && Vertex & Stub \\\midrule
    		\texttt{congress-bills}* & 0.608 && 0.601(1) & 0.622(2)  && 0.451(2)  & 0.611(1) \\
    		\texttt{coauth-MAG-Geology}* & 0.8200 && 0.8196(7)  & 0.8186(7)  && 0.00035(3) & 0.00035(3) \\
    		\texttt{email-Enron} & 0.658 && 0.825(3)  & 0.808(4)  && 0.638(5) & 0.797(3) \\
    		\texttt{email-Eu}* & 0.540 && 0.569(4) & 0.601(4)  && 0.398(4)  & 0.598(4) \\
    		\texttt{tags-ask-ubuntu}* & 0.571 && 0.609(4)  & 0.631(5)  && 0.183(4) & 0.499(6)  \\
    		\texttt{threads-math-sx}* & 0.293 && 0.435(3) & 0.426(3) && 0.041(1)  & 0.093(2) \\ \\ 
    	\end{tabular}
    	\caption{
    		Average local clustering coefficients for selected data sets, compared to their expectations computed under vertex- and stub-labeling of hypergraph and projected graph models. 
    		Parentheses show standard deviations in the least-significant figure under the equilibrium distribution of each null model.
    		Starred* data sets have been temporally filtered as described in \Cref{sec:data_prep}.
    		} \label{tb:clustering}
	\end{table}

    \Cref{tb:clustering} summarizes a sequence of experiments performed on two collaboration networks (top) and four communication networks (bottom).
    For each network, we  computed the observed local clustering coefficient $\bar{C}$ on the unweighted projected graph. 
    We then compared the observed value to its null distribution under four randomizations. 
    We first randomized using the vertex- and stub-labeled hypergraph configuration models, \emph{prior} to projecting and measuring $\bar{C}$.
    These results are shown in the second and third columns. 
    We then reversed the order, first computing the projected graph and randomizing via dyadic configuration models.
    The results are shown in the fourth and fifth columns. 
    
    Benchmarking against dyadic configuration models yields mixed results. 
    Vertex-labeled configuration models conclude in all cases that the observed degree of clustering is significantly higher than would be expected by random chance. 
    Stub-labeled benchmarking concludes that \texttt{congress-bills} and the two email data sets have significantly less clustering than expected, while the remainder have significantly more. 
    The stub-labeled results should be approached with caution -- for reasons discussed in detail in \cite{Fosdick2018}, the stub-labeled configuration model is a less-relevant comparison for these data sets than the vertex-labeled model. 
    
    Hypergraph randomization leads to directionally different conclusions. 
    First, the expected values of $\bar{C}$ under both hypergraph vertex- and stub-labeled nulls are much closer together than under dyadic nulls, indicating that the polyadic statistical test is much less sensitive than the dyadic test to the choice of vertex- and stub-labeling.
    Second, the vertex-labeled null separates the two collaboration networks from the four communication networks. 
    These two data sets are only slightly more clustered than expectation under the vertex-labeled model, and only \texttt{congress-bills} would be considered ``significantly more clustered'' than its expectation under most $p$-value based tests. 
    The stub-labeled model also has expectation close to the observed values, but finds \texttt{congress-bills} to be slightly-but-significantly less clustered than would be expected by chance, while the significance of \texttt{coauth-MAG-Geology} would depend strongly on the desired power of the test. 
    In contrast, the four communication networks are all significantly \emph{less} clustered than either vertex- or stub-labeled nulls would expect. 
    Not only is there no clustering beyond that implied by the edge dimensions; triadic closure even appears to be inhibited in these data sets. 

    From a purely statistical perspective, these examples highlight the importance of careful null model selection in hypothesis-testing for triadic closure. 
    More physically, use of hypergraph nulls allows us in this case to distinguish data sets by their generative mechanisms. 
    The communication networks are all less clustered than expected, while the collaboration networks are approximately as clustered as expected. 
    This result is to some extent intuitive. 
    Collaborations between many agents often have nontrivial coordination costs that scale with the number of agents involved. 
    It may be easier to assemble and coordinate a set of overlapping groups than a single large collective. 
    In such cases, one may expect to observe clustering near or above that expected at random, since overlaps between related groups would generate triangles.
    In contrast, in digital communications it is essentially effort-free to construct interactions between larger groups of agents. 
    Examples include adding an email address to the ``cc'' field or introducing participants to thread on a forum.
    In such cases, triangles composed of distinct edges are energetically unnecessary, and may reflect redundant information flow. 
    We therefore hypothesize that these systems have a tendency to absorb potential triangles into higher-dimensional interactions. 
    This results in lower levels of clustering than would be expected under polyadic nulls. 
    These considerations hint toward the importance of studying edge correlations via natively polyadic metrics as we do in \Cref{subsec:intersection}.
    
  \subsection{Degree-Assortativity} \label{subsec:assortativity}

    A network is degree-assortative when nodes of similar degrees  preferentially interact with each other. 
    Early studies found that different categories of social, biological, and technological networks display different patterns of assortative mixing by degree \cite{Newman2003b,Newman2002a,Colizza2006a}. 
    Social networks, for example, are frequently measured to be degree-assortative. 
    In this context, degree-assortativity is often taken to indicate a tendency for popular or productive agents to interact with each other. 

    We measure degree-assortativity in hypergraphs via a generalization of the standard Spearman rank assortativity coefficient to hypergraphs. 
    Importantly, there are multiple possible generalizations, each of which measures distinct structural information about degree correlations. 
    Let $E_{\geq 2} = \{\Delta \in E : \abs{\Delta} \geq 2 \}$.   
    Let $h:E_{\geq 2}\rightarrow N^2$ be a (possibly random) \emph{choice function} that assigns to each edge $\Delta$ two distinct nodes $u,v \in \Delta$. 
    Three possibilities of interest are: 
    \begin{align}
        h(\Delta) &= (u,v) \sim \mathrm{Uniform}\binom{\Delta}{2} \tag{Uniform}\\
        h(\Delta) &= (u,v) = \argmax_{w,w' \in \binom{\Delta}{2}} d_wd_{w'} \tag{Top-2} \\ 
        h(\Delta) &= (u,v) = \left(\argmax_{w \in \Delta} d_w,\; \argmin_{w \in \Delta} d_w\right) \tag{Top-Bottom}
    \end{align}
    The Uniform choice function selects two distinct nodes at random. 
    The Top-2 choice function selects the two distinct nodes in the edge with largest degree. 
    The Top-Bottom choice function selects the  nodes with largest and smallest degree. 

    Let $r:N\rightarrow \R$ be a ranking function on the node set; we will always take $r(u)$ to be the rank of node $u$ by degree in the hypergraph. 
    For fixed $h$, let $f:E\rightarrow \R^2$ be defined componentwise by $f_j(\Delta) = (r \circ h_j)(\Delta)$.
    Then, the \emph{generalized Spearman assortativity coefficient} is the empirical correlation coefficient between $f_1(\Delta)$ and $f_2(\Delta)$:
    \begin{align}
        \rho_h = \frac{\sigma^2\left(f_1(\Delta),f_2(\Delta)\right)}{\sqrt{\sigma^2\left(f_1(\Delta), f_1(\Delta)\right)\sigma^2\left(f_2(\Delta), f_2(\Delta)\right)}}, \label{eq:spearman}
    \end{align}
    where $\sigma^2(X,Y) = \bracket{XY} - \bracket{X}\bracket{Y}$ and brackets express averages over pairs of edges in $E$. 
 
    In the case of dyadic graphs, the three choice functions above are trivially identical, since there is only one way to pick two nodes from an edge of size two.  
    On polyadic data, however, the resulting Spearman coefficients capture usefully different classes of information. 
    For example, in studying coauthorship networks, they may be used to test hypotheses such as the following: 
    \begin{enumerate}
        \item \textbf{Generic Assortativity}: On a given paper, most coauthors will simultaneously be more or less prolific than average. 
        \item \textbf{Junior-Senior Assortativity}: The least profilic author on a paper will tend to be relatively more prolific if the most prolific author is relatively more prolific. 
        \item \textbf{Senior-Senior Assortativity}: The two most prolific authors on a paper will tend to be simultaneously more or less prolific than average. 
    \end{enumerate}
    While the corresponding Spearman coefficients may in general be correlated, substantial variation manifests across study data sets. 
    \Cref{fig:significance} shows measurements and significance tests for one synthetic data set and the six empirical data sets studied in the previous section.
    The synthetic data consists of five copies of the hypergraph shown in \Cref{fig:toy}. 
    For each data set, we compute the dyadic assortativity coefficient on the projected graph (first row), as well as each of the three polyadic assortativity coefficients defined above. 

    \begin{figure*}
      \includegraphics[width=\textwidth]{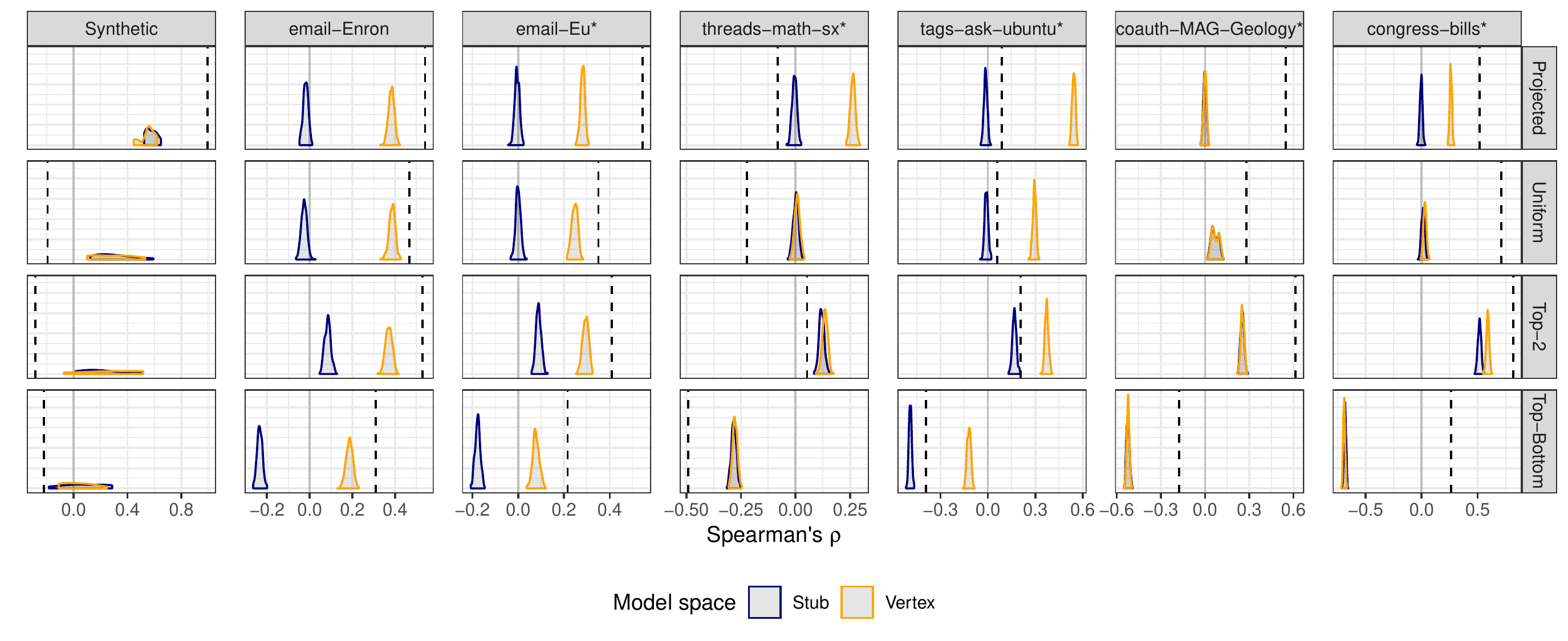}
      \caption{
        Significance tests of degree-assortativity in synthetic and empirical networks. 
        The synthetic data consists of five copies of the illustrative network shown in \Cref{fig:toy}. 
        In each figure, the dashed line gives the observed Spearman correlation, and densities give the null distributions under vertex- and stub-labeled configuration models.
        In the first row only, the hypergraph was projected down to an unweighted dyadic graph prior to randomization. 
        Starred$^*$ data sets have been temporally filtered as described in the SI. 
      } \label{fig:significance}
    \end{figure*}

    The synthetic data (first column) illustrates a stark case in which dyadic hypothesis-testing leads to a finding of statistically-significant assortativity, while polyadic hypothesis-testing finds statistically-significant \emph{dis}assortativity. 
    In each of the empirical data sets, the dyadic and polyadic tests show directional agreement. 
    However, the polyadic tests highlight several features of the data missed by the dyadic tests.   
    In the two email data sets, all four coefficients are  positive and to the right of the null distributions, though projecting (first row) increases the significance of the coefficients relative to hypergraph randomization (second row). 
    The two forum data sets (\texttt{threads-math-sx} and \texttt{tags-ask-ubuntu}) are disassortative when compared to vertex-labeled nulls. 
    The fact that \texttt{tags-ask-ubuntu} is disassortative despite a positive uniform hypergraph Spearman coefficient speaks to the importance of carefully specified null hypothesis testing. 
    Interestingly, the misspecified stub-labeled randomization would lead to the opposite finding.  
    The coauthorship network \texttt{coauth-MAG-Geology} is highly assortative in all metrics -- including the top-bottom measure, which is negative. 
    The \texttt{congress-bills} data set is also assortative in all measures. 
    Unlike the other data sets, the uniform hypergraph coefficient lies farther from the bulk of its null distribution than does the projected coefficient.
    We note in passing that, whereas the stub-labeled and vertex-labeled hypergraph distributions had similar expected rates of triadic closure \cref{tb:clustering}, their distributions of degree-assortativity coefficients vary substantially, and would in some cases lead to directionally different study conclusions. 
    
    When studying triadic closure, we saw how hypergraph null models could lead us to differently-contextualize standard graph metrics. 
    When studying assortativity, we gain even more. 
    Use of hypergraph nulls allows us to forgo the dyadic projection operation, and thereby define rich polyadic assortativity measures. 
    Hypergraph null models thus enable us to expand our network-analytic toolboxes by measuring and interpreting novel structural patterns in polyadic data. 

\subsection{Hyperedge Intersection Profiles} \label{subsec:intersection}

    Let $\Delta, \Gamma \in H$. 
    What is the size of their intersection? 
    In the case of dyadic graphs, the intersection can have size at most two, when $\Delta$ is parallel to  $\Gamma$. 
    In hypergraphs intersections of arbitrary sizes may occur. 
    The existence of large intersections in a data set may indicate the emergence of polyadic social ties between groups of agents, or interpretable event sequences such as email threads or series of related scholarly papers. 
    Several recent papers \cite{Benson2018,patania2017shape} have studied similar questions by considering the rate at which ``holes'' in the hypergraph tend to be ``filled in'' by higher-order interactions. 
    We take a simpler approach, defining a measure which is both easily computed and amenable to analytical approximation. 
    
    \begin{dfn}[Intersection Profile]
        For fixed $k,\ell \in \mathbb{Z}_+$, the  \emph{conditional intersection profile} of a hypergraph $H \in \mathcal{H}$ is the distribution
        \begin{align*}
            r_{k\ell}(j|H) = \bracket{\mathbbm{I}(\abs{\Delta \cap \Gamma} = j)}_{k\ell}\;, 
        \end{align*}
        where $\bracket{\cdot}_{k\ell}$ denotes the empirical average over all hyperedges $\Delta$ of size $k$ and $\Gamma$ of size $\ell$.
        The \emph{marginal intersection profile} is 
        \begin{align*}
            r(j|H) = \bracket{\mathbbm{I}(\abs{\Delta\cap\Gamma} = j)}\;,
        \end{align*}
        with the average taken over all pairs of distinct edges in $E$. 
    \end{dfn}
    Large values of $r_{k\ell}(j|H)$ indicates that edges of size $k$ and $\ell$ frequently have intersections of size $j$ in $H$. 
    Empirical data sets may possess complex patterns of correlation between edges of various sizes. 
    Evaluating whether an observed conditional or marginal intersection profile is noteworthy requires comparison to appropriately-chosen null models.

    \Cref{fig:intersections} demonstrates the use of hypergraph configuration models to study the intersection profile of the \texttt{email-Enron} data set.
    In \Cref{fig:intersections}(a), we compare the empirical average intersection size $\bracket{J}_{k\ell} = \sum_{j = 0}^{\infty} jr_{k\ell}(j|H)$ to its average $\bracket{\hat{J}}_{k\ell}$ under the vertex-labeled configuration model. 
    Higher values of the ratio $ \frac{\bracket{J}_{k\ell}}{\bracket{\hat{J}}_{k\ell}}$ indicate the presence of denser intersections between edges of sizes $k$ and $\ell$. 
    Notably, the empirical averages are not uniformly higher than the null model averages, even on the diagonal. 
    There is apparent block structure, indicating that edges of certain sizes tend to correlate most strongly with certain other sizes.
    Edges of dimension $3$ through $6$ tend to interact strongly with each other, as do edges of dimension $7$ and $8$. 
    However, edges in the smaller group interact more weakly with edges in the larger group than would be expected by chance. 
    Further, more detailed study may be able to shed light on the groups of agents involved in these overlapping communications. 
    
    \begin{figure*}
      \centering
      \includegraphics[width=.9\textwidth]{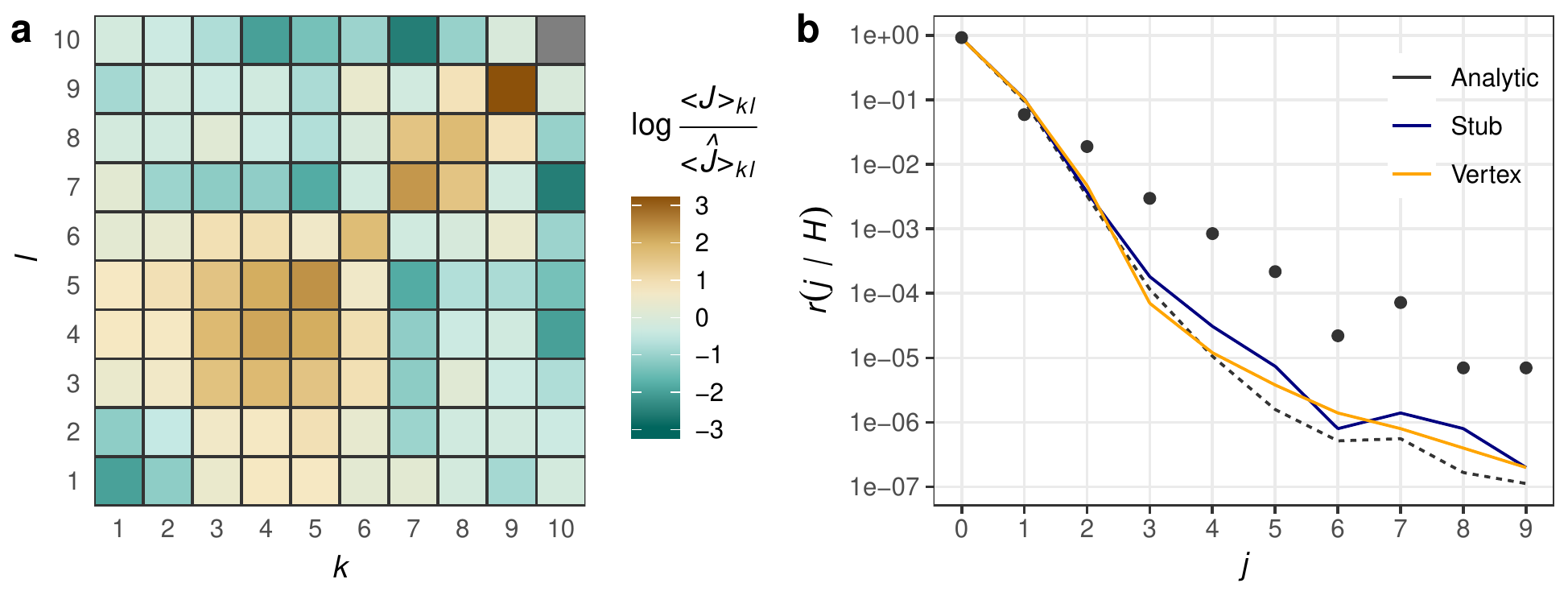}
        \caption{
            Analysis of intersection profiles in the \texttt{email-Enron} data set.  
            (\textbf{a}): The average of the intersection size normalized by the expectation $\bracket{\hat{J}}_{k\ell}$ under the vertex-labeled configuration model.  
          Positive values indicate that the data displays larger intersections than expected under the configuration model for the corresponding values of $k$ and $\ell$. 
          Colors are shown on a log scale. 
          The missing value at $(k,\ell) = (10,10)$ indicates that no nonempty intersections were observed between edges of these sizes in the Monte Carlo sampling runs. 
          (\textbf{b}): Marginal intersection profile (points) of the empirical data, compared to null distributions under the stub- and vertex-labeled configuration models. 
          The dashed gray line gives the analytic approximation of \Cref{eq:asymptotic}. 
          Note the logarithmic vertical axis. 
    } 
        \label{fig:intersections}
    \end{figure*}

    \Cref{fig:intersections}(b) gives a global view of the data using the marginal intersection profile. 
    The observed profile (points in \Cref{fig:intersections}(b)) is nearly linear on semilog axes through $j = 6$, suggesting that the decay in the intersection size is roughly exponential. 
    In order to evaluate whether this behavior indicates nonrandom clustering between edges, we again turn to hypergraph configuration models. 
    The expectation $\hat{r}(j) = \E_\nu[r(j|H)]$ of the marginal intersection profile under a configuration model $\nu \in \{\mu_{\D,\K}, \eta_{\D,\K}\}$ measures the typical behavior of a comparable random hypergraph.
    The solid lines in \Cref{fig:intersections}(b) give these expected profiles under both stub- and vertex-labeled models, which directionally agree. 
    The observed data shows fewer intersections on single vertices than would be expected by chance.
    On the other hand, for $j\geq 3$, $r(j|H)$ exceeds $\hat{r}(j)$ by an order of magnitude or more, suggesting substantial higher-order correlation in the data. 
    These results likely reflect the passing of multiple messages between the same sets of users.

    Some data sets may be too large to practically estimate $\hat{r}(j)$ by Monte Carlo methods. 
    In such cases, it is possible to approximate $\hat{r}(j)$ under the stub-labeled configuration model analytically, using the following asymptotic result. 
    \begin{thm} \label{thm:analytic}
    	Fix $\ell$, $k$, and $j$. 
    	Let $\mathbf{D}\in \mathbb{Z}_+^n$ be a vector of i.i.d. copies of positive, discrete random variable $D \in \mathbb{Z}_+$ such that $D \leq d_{\mathrm{max}}$ almost surely for some $d_{\mathrm{max}}$. 
    	Let $\mathbf{K} \in \mathbb{Z}_+^m$ be any vector of edge dimensions configurable with $\mathbf{D}$. 
    	Let $H \sim \mu_{\mathbf{D}, \mathbf{K}}$, and
    	let $\Delta$ and $\Gamma$ be uniformly random edges of $H$. 
    	Then, with probability approaching unity as $n$ grows large, 
    	\begin{align}
    		\hat{r}_{k\ell}(j) = (1 + O(n^{-1}))j! \binom{k}{j} \binom{\ell}{j}\left(\frac{1}{n}\frac{\E[D^2] - \E[D]}{\E[D]^2}\right)^j\;. \label{eq:asymptotic}
    	\end{align} 
    \end{thm}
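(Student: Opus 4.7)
The plan is to work in the stub-matching representation of $\mu_{\D,\K}$, compute the $j$th factorial moment of $\abs{\Delta \cap \Gamma}$, and extract $\hat{r}_{k\ell}(j)$ by using the boundedness $\abs{\Delta \cap \Gamma} \leq \min(k,\ell)$ to invert the factorial-moment relation. First I would condition on the random degree vector $\D$: since $D \leq d_{\max}$ almost surely, standard concentration inequalities give that, with probability tending to one, $M := \sum_v d_v = n\E[D] + O(\sqrt{n})$ and $\sum_v d_v(d_v - 1) = n(\E[D^2] - \E[D]) + O(\sqrt{n})$. Working on this event and using the stub-matching description in \Cref{alg:stub_matching}, conditioned on $\abs{\Delta}=k$ and $\abs{\Gamma}=\ell$ the combined $k+\ell$ stubs of $\Delta$ and $\Gamma$ form a uniformly random choice of $k+\ell$ distinct stubs from the $M$ total, partitioned uniformly between the two edges.

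Next, write $\abs{\Delta \cap \Gamma} = \sum_v I_v$ with $I_v = \mathbbm{I}[v \in \Delta]\,\mathbbm{I}[v \in \Gamma]$. Because $d_v \leq d_{\max}$ and $k,\ell$ are fixed, the dominant contribution to $\Pr(I_{v_1} = \cdots = I_{v_j} = 1)$ arises from the event that $\Delta$ contains exactly one stub of each $v_i$ and $\Gamma$ contains one of the remaining $d_{v_i}-1$ stubs of each; a short hypergeometric inclusion-exclusion gives
\begin{align*}
\Pr(I_{v_1} = \cdots = I_{v_j} = 1) = \frac{(k)_j (\ell)_j \prod_{i=1}^j d_{v_i}(d_{v_i}-1)}{M^{2j}}\bigl(1 + O(n^{-1})\bigr)\;,
\end{align*}
where $(k)_j = k(k-1)\cdots(k-j+1)$. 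Summing over ordered $j$-tuples of distinct vertices, the diagonal terms in the multinomial expansion of $\bigl(\sum_v d_v(d_v-1)\bigr)^j$ are smaller than the off-diagonal terms by $O(n^{-1})$, so substituting the concentration estimates above yields
\begin{align*}
\E\bigl[(\abs{\Delta\cap\Gamma})_j\bigr] = (k)_j (\ell)_j \left(\frac{\E[D^2] - \E[D]}{n\E[D]^2}\right)^j\bigl(1 + O(n^{-1})\bigr)\;.
\end{align*}

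Finally, the identity $\Pr(X = j) = \sum_{i \geq j} \frac{(-1)^{i-j}}{j!(i-j)!} \E\bigl[(X)_i\bigr]$ for non-negative integer $X$ is a finite sum here, since $\abs{\Delta \cap \Gamma} \leq \min(k,\ell)$; each term with $i > j$ is of order $n^{-i}$, hence smaller than the $i=j$ term by a factor $O(n^{-1})$. Keeping only the leading term gives
\begin{align*}
\hat{r}_{k\ell}(j) = \frac{(k)_j(\ell)_j}{j!}\left(\frac{1}{n}\frac{\E[D^2]-\E[D]}{\E[D]^2}\right)^j\bigl(1 + O(n^{-1})\bigr) = j!\binom{k}{j}\binom{\ell}{j}\left(\frac{1}{n}\frac{\E[D^2]-\E[D]}{\E[D]^2}\right)^j\bigl(1 + O(n^{-1})\bigr)\;,
\end{align*}
as claimed. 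The main obstacle is bookkeeping: the bounded-$D$ hypothesis is used crucially both in producing the $d_{v_i}(d_{v_i}-1)$ factors (one needs the ``exactly one stub from each $v_i$'' event to dominate with relative error $O(n^{-1})$ rather than $O(1)$) and in the concentration of the empirical moments $\sum_v d_v^k$ around their means; without these, the error terms could swamp the claimed leading behavior.
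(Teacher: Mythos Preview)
Your argument is correct and is essentially the paper's proof: both condition on the degree sequence, use the stub-matching description of $\mu_{\D,\K}$, and compute the relevant probability by summing $\prod_i d_{v_i}(d_{v_i}-1)/M^{2}$ over $j$-tuples of vertices, replacing empirical degree moments by their expectations via concentration. The one substantive difference is that you frame the computation as the $j$th factorial moment of $\abs{\Delta\cap\Gamma}$ and then explicitly invert, whereas the paper's counting argument in fact produces the binomial moment $\E\binom{\abs{\Delta\cap\Gamma}}{j}$ and silently identifies it with $\hat r_{k\ell}(j)$; your inclusion--exclusion step, noting that the $i>j$ terms are $O(n^{-i})$, supplies the justification the paper omits.
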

    \begin{proof}
    	Let $\bracket{d} = \frac{1}{n}\sum_{u \in N}d_u$ denote the empirical mean degree of a given degree sequence $\D$.  
	    Assume without loss of generality that $\Delta = \{\delta_1,\ldots,\delta_k\}$ and $\Gamma = \{\gamma_1,\ldots,\gamma_\ell\}$ are the first two hyperedges formed by \Cref{alg:stub_matching}, conditioned on nondegeneracy.   
	    There are $\binom{k}{j}$ ways to choose the $j$ elements of $\Delta$ contained in $\Delta \cap \Gamma$, and similarly $\binom{\ell}{j}$ ways to choose the elements of $\Gamma$. 
	    There are then $j!$ ways to place these two sets in bijective correspondence. 
	    Define the event $A = \{\delta_h = \gamma_h,\; h = 1,\ldots,j\}$. 
	    Then, $\hat{r}_{k\ell}(j) = j!\binom{k}{j}\binom{k}{\ell}\mu_{\mathbf{D},\mathbf{K}}(A)$.
	    To compute $\mu_{\mathbf{D},\mathbf{K}}(A)$, we may explicitly enumerate
	    \begin{align*}
	        \mu_{\mathbf{D},\mathbf{K}}(A) = \sum_{u \in N} \frac{d_u}{n\bracket{d}}\frac{d_u-1}{n\bracket{d}-1} \left[\sum_{v \in N\setminus\{u\}} \frac{d_v}{n\bracket{d}-d_u}\frac{d_v-1}{n\bracket{d}-d_u-1}\left[\sum_{w \in N\setminus \{u,v\}}\cdots\right]\right]\;,
	    \end{align*}
	    with a total of $j$ sums appearing. 
	    In each summation, the first factor gives the probability that $\delta_1 = u$ and the second that $\gamma_1 = u$. 
	    Consider the innermost summation, which may be written 
	    \begin{align}
	        S_R = \sum_{z\in N\setminus R} \frac{d_z}{n\bracket{d} - \sum_{y \in R}d_y}\frac{d_z-1}{n\bracket{d}- \sum_{y \in R}d_y-1} \label{eq:to_expand}
	    \end{align}
	    for a set $R$ of size $j-1$. 
	    Since $D \leq d_{\mathrm{max}}$ a.s., we may employ Chebyshev's inequality to find that  $(n\bracket{d})^{-1}\sum_{y \in R}d_y = O(n^{-1})$ w.h.p. 
	    We may therefore w.h.p. expand both factors within
	    \Cref{eq:to_expand}, obtaining the expression
	    \begin{align*}
	        \sum_{z\in N\setminus R} 
	        \frac{d_z(d_z-1)}{n^2 \bracket{d}^2}\left(1 + O(n^{-1})\right)\;.
	    \end{align*}
	    Using Chebyshev's inequality again, we also obtain asymptotic behavior on the other expressions appearing above. $\bracket{d} = \left(1 + O\left(n^{-1}\right)\right)\E[D]$ and $\sum_{z\in N\setminus R} 
	    \frac{d_z(d_z-1)}{n} = \left(1 + O\left(n^{-1}\right)\right) (\E[D^2] - \E[D])$, both w.h.p. 
	    We have therefore shown that 
	    \begin{align}
	        S_R = \left(1 + O\left(n^{-1}\right)\right)\left(\frac{1}{n}\frac{\E[D^2] - \E[D]}{\E[D]^2}\right) \label{eq:example}
	    \end{align}
	    w.h.p. 
	    This argument may be repeated inductively for each of the remaining $j-1$ sums, each of which contributes the same factor appearing in \Cref{eq:example}, proving the theorem. 
    \end{proof}
    \Cref{fig:intersections} shows the resulting approximation for $\hat{r}_{k\ell}$ as a dashed line, finding excellent qualitative agreement. 
    This approximation may be used to study intersection profiles in data sets of arbitrary size. 
    \Cref{fig:large_intersections} shows the use of this approximation to study intersection profiles in hypergraph data sets of arbitrary size. 
   	The top set of panels shows four data sets in which the approximate null intersection profile consistently underestimates the rates of large intersections by several orders of magnitude, clearly indicating the presence of correlation structure over and above what would be expected under hypergraph randomization. 
   	The lower panels show four additional data sets in which the approximate null profile more closely-approximates the observed data. 	

    \begin{figure*}
        \includegraphics[width=\textwidth]{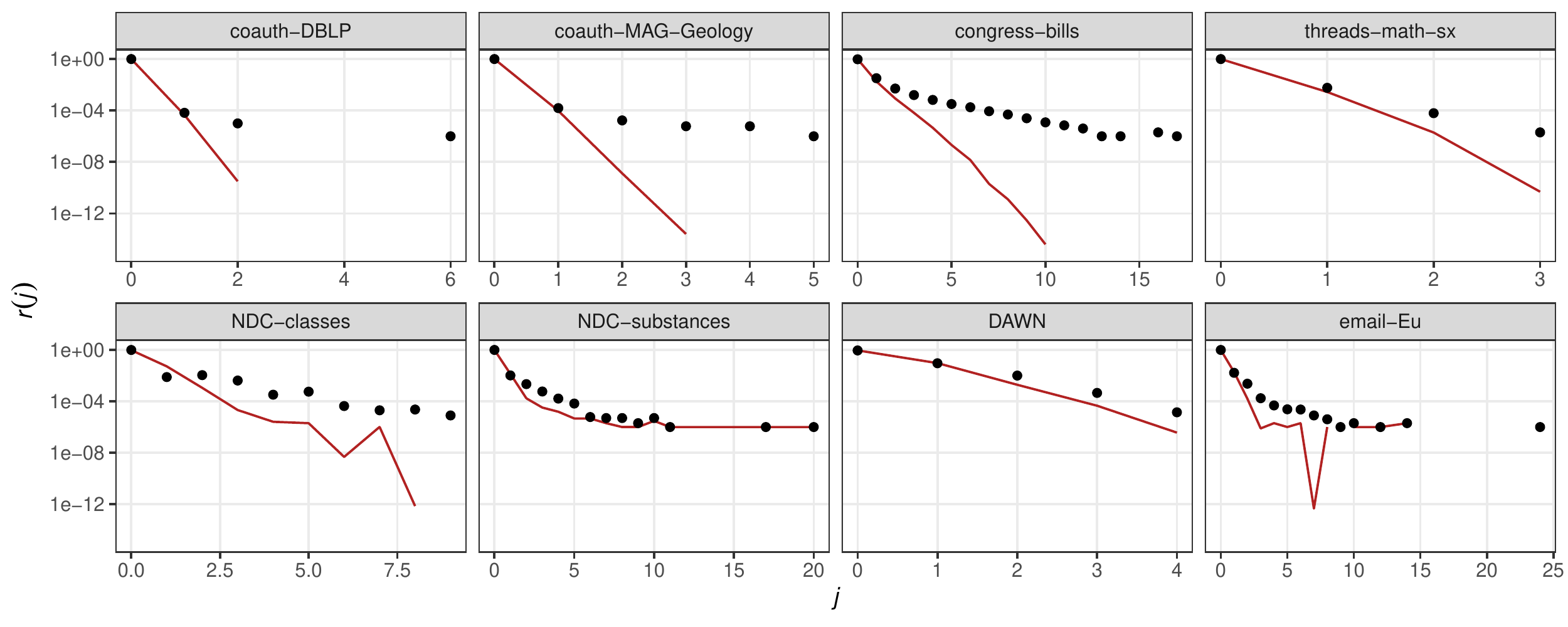}
        \caption{Points give the observed intersection profile for six large polyadic data sets. 
        The solid line gives the null intersection profile of \Cref{thm:analytic}.
        In this visualization, full data sets were used -- the temporal filtering described in \Cref{sec:data_prep} was not performed.} \label{fig:large_intersections}
    \end{figure*}

\section{Discussion}\label{sec:conclusion}

    Configuration models of random hypergraphs preserve the first moments of the data --  the degree and edge-dimension sequences -- while remaining maximally ignorant about additional data structure.
    These models extend the widely-used configuration models for dyadic graphs, and serve as natural null models for polyadic network data analysis.
    We have demonstrated how to define, sample from, analyze, and apply these models.
    We have seen that the  choice between nulls can greatly impact the directional findings of studies of empirical polyadic data. 
    The analyst faced with such a choice must therefore carefully consider whether dyadic simplification will lead to data representations and null spaces that are relevant for their application area. 
    Second, employing polyadic nulls often allows the analyst to define novel measures that can illuminate higher-order structure in data. 
    We have illustrated this with extended assortativity measures and intersection profiles, but many more extensions are possible.  
    We hope that the introduction of statistically-grounded hypergraph nulls will encourage analysts to design, measure, and carefully interpret many novel measures of polyadic network structure. 

    There are several directions of future work on configuration models of random hypergraphs. 
    Beginning with theory, many classical asymptotic results on dyadic configuration models invite generalization. 
    These include probabilistic characterization of component sizes; cycles and parallel edges; and the diameter of the connected component in various regimes. 
    We also highlight two applications of potential interest. 
    The first is motif analysis. 
    A network motif is a subgraph that appears with higher-than-expected frequency in a given network \cite{milo2002network}, relative to a given null model. 
    Considering the explicit dependence of this definition on the null, we conjecture that motif-discovery algorithms based on polyadic nulls may highlight importantly distinct structure when compared to dyadic nulls. 
    A second promising application is in hypergraph clustering and community detection. 
    A recent paper \cite{Kami2018} offers a definition of modularity --- a common quality function for network partitioning --- based on a polyadic generalization of the Chung-Lu model \cite{Chung2002}. 
    In this case, the modularity of a given partition may be computed analytically. 
    The same calculations used to prove \cref{thm:analytic} can also be used to show that the stub-labeled configuration model will give an asymptotically equivalent expression. 
    However, for the large class of data sets more appropriately modeled by vertex-labeled nulls, other methods may be necessary.
    We anticipate that pursuing these tasks will pose interesting theoretical and computational challenges. 

\subsection*{Funding}

	This work was supported by the National Science Foundation Graduate Research Fellowship under award number 1122374.

\subsection*{Acknowledgments}

	I am grateful to Patrick Jaillet for helpful discussions from which this work benefited substantially. 

\subsection*{Software}

	A \texttt{hypergraph} class, written in Python 3.5, is available at 
	\begin{align*}
		\text{\texttt{https://github.com/PhilChodrow/hypergraph}.}
	\end{align*}
	This class includes implementations of Monte Carlo sampling for both stub- and vertex-labeled configuration models. 

\subsection*{Data}

	The data sets used in this paper were prepared by the authors of \cite{Benson2018} and accessed from \texttt{https://www.cs.cornell.edu/~arb/data/}. 

\appendix

\section{Data Preparation} \label{sec:data_prep}
    The data sets used in this paper were prepared by the authors of \cite{Benson2018} and accessed from \texttt{https://www.cs.cornell.edu/~arb/data/}. 
    Some data sets have been filtered to exclude edges prior to a temporal threshold $\tau$ in order to promote practical compute times on triangle counting and mixing of vertex-labeled models in projected graph spaces. 
    Notably, in no cases was sampling from hypergraph configuration models the computational bottleneck. 
    Thresholds were chosen to construct data with edge sets of approximate size $m\approx 10^4$, but are otherwise arbitrary. 
    Temporal data subsets were used in the generation of \Cref{tb:clustering,fig:significance}.
    \Cref{tb:data} gives the node and edge counts of both the original data and the data after temporal subsetting when applicable. 
    
	\begin{table}[h]
		\centering
		\ra{1.3}
		\begin{tabular}{@{}lrrrrrrrr@{}}
		\multicolumn{1}{c}{}& \multicolumn{1}{c}{}& \multicolumn{2}{c}{Original} &\phantom{a}&\phantom{a} & \phantom{a} &\multicolumn{2}{c}{Filtered}\\ 
		\cmidrule{3-4}\cmidrule{8-9}
		& & $n$ & $m$ &&$\tau$&& $n$ & $m$ \\\midrule
		\texttt{email-Enron} && $143$ & $10,886$ &&$-$&&$-$ & $-$  \\ 
		\texttt{email-Eu} && $1,006$ & $235,264$ &&$1.105 \times 10^{9}$&& $817$ & $32,117$  \\ 
		\texttt{congress-bills} \cite{Fowler-2006-connecting,Fowler2006}&  & $1,719$  &$260,852$ && $7.315 \times 10^5$&& 537 & 6,661  \\ 
		\texttt{coauth-MAG-Geology} \cite{Sinha-2015-MAG}&& $1,261,130$ &$1,591,167$&&$2017$&& $73,436$ & $23,434$ \\
		\texttt{threads-math-sx} && $201,864$ & $719,793$ &&$2.19 \times 10^{12}$&& $11,880$ & $22,786$ \\ 
		\texttt{tags-ask-ubuntu} && $200,975$ & $192,948$ &&$2.6 \times 10^{12}$ && $2,120$ & $19,338$ \\ \\ 
		\end{tabular}
		\caption{Summary of data preparation. 
		When $\tau$ is given, the filtered data set consists in all edges that occurred after time $\tau$.} \label{tb:data}
	\end{table}



\nocite{Fowler-2006-connecting,Fowler2006,Sinha-2015-MAG}

\bibliographystyle{imaiai} 
\bibliography{references.bib}

\ifx\undefined\BySame
\newcommand{\BySame}{\leavevmode\rule[.5ex]{3em}{.5pt}\ }
\fi
\ifx\undefined\textsc
\newcommand{\textsc}[1]{{\sc #1}}
\newcommand{\emph}[1]{{\em #1\/}}
\let\tmpsmall\small
\renewcommand{\small}{\tmpsmall\sc}
\fi
\begin{thebibliography}{99}

\bibitem{amanatidis2015graphic}
\textsc{Amanatidis, G., Green, B.  {\small \&} Mihail, M.}  (2015) Graphic
  realizations of joint-degree matrices. \emph{arXiv preprint
  arXiv:1509.07076}, pp. 1--18.

\bibitem{Angel2016}
\textsc{Angel, O., van~der Hofstad, R.  {\small \&} Holmgren, C.}  (2016)
  {Limit laws for self-loops and multiple edges in the configuration model}.
  \emph{arXiv:1603.07172}, pp. 1--19.

\bibitem{artzy2005generating}
\textsc{Artzy-Randrup, Y.  {\small \&} Stone, L.}  (2005) Generating uniformly
  distributed random networks. \emph{Phys. Rev. E}, \textbf{72}, 056708.

\bibitem{Barabasi2008}
\textsc{Barabasi, A.-L., Jeong, H., Neda, Z., Ravasz, E., Schubert, A.  {\small
  \&} Vicsek, T.}  (2002) {Evolution of the social network of scientific
  collaborations}. \emph{Physica A: Statistical Mechanics and its
  Applications}, \textbf{311}(3-4), 590--614.

\bibitem{bender1978asymptotic}
\textsc{Bender, E.~A.  {\small \&} Canfield, E.~R.}  (1978) The asymptotic
  number of labeled graphs with given degree sequences. \emph{Journal of
  Combinatorial Theory, Series A}, \textbf{24}(3), 296--307.

\bibitem{Benson2018}
\textsc{Benson, A.~R., Abebe, R., Schaub, M.~T., Jadbabaie, A.  {\small \&}
  Kleinberg, J.}  (2018) {Simplicial closure and higher-order link prediction}.
  \emph{Proceedings of the National Academy of Sciences}, \textbf{115}(48),
  11221--11230.

\bibitem{Benson2016}
\textsc{Benson, A.~R., Gleich, D.~F.  {\small \&} Leskovec, J.}  (2016)
  {Higher-order organization of complex networks}. \emph{Science},
  \textbf{353}(6295), 163--166.

\bibitem{blitzstein2011sequential}
\textsc{Blitzstein, J.  {\small \&} Diaconis, P.}  (2011) A sequential
  importance sampling algorithm for generating random graphs with prescribed
  degrees. \emph{Internet mathematics}, \textbf{6}(4), 489--522.

\bibitem{Bollobas1980}
\textsc{Bollob{\'{a}}s, B.}  (1980) {A probabilistic proof of an asymptotic
  formula for the number of labelled regular graphs}. \emph{European Journal of
  Combinatorics}, \textbf{1}(4), 311--316.

\bibitem{Carlsson2009a}
\textsc{Carlsson, G.}  (2009) {Topology and data}. \emph{Bulletin of the
  American Mathematical Society}, \textbf{46}(2), 255--308.

\bibitem{carstens2015proof}
\textsc{Carstens, C.~J.}  (2015) Proof of uniform sampling of binary matrices
  with fixed row sums and column sums for the fast curveball algorithm.
  \emph{Physical Review E}, \textbf{91}(4), 042812.

\bibitem{Chung2002}
\textsc{Chung, F.  {\small \&} Lu, L.}  (2002) {The average distances in random
  graphs with given expected degrees}. \emph{Proceedings of the National
  Academy of Sciences}, \textbf{99}(25), 15879--15882.

\bibitem{Colizza2006a}
\textsc{Colizza, V., Flammini, A., Serrano, M.~A.  {\small \&} Vespignani, A.}
  (2006) {Detecting rich-club ordering in complex networks}. \emph{Nature
  Physics}, \textbf{2}(2), 110--115.

\bibitem{Courtney2016}
\textsc{Courtney, O.~T.  {\small \&} Bianconi, G.}  (2016) {Generalized network
  structures: The configuration model and the canonical ensemble of simplicial
  complexes}. \emph{Physical Review E}, \textbf{93}(6), 1--26.

\bibitem{del2010efficient}
\textsc{Del~Genio, C.~I., Kim, H., Toroczkai, Z.  {\small \&} Bassler, K.~E.}
  (2010) Efficient and exact sampling of simple graphs with given arbitrary
  degree sequence. \emph{PloS one}, \textbf{5}(4), e10012.

\bibitem{Fosdick2018}
\textsc{Fosdick, B.~K., Larremore, D.~B., Nishimura, J.  {\small \&} Ugander,
  J.}  (2018) Configuring random graph models with fixed degree sequences.
  \emph{SIAM Review}, \textbf{60}(2), 315--355.

\bibitem{Fowler-2006-connecting}
\textsc{Fowler, J.~H.}  (2006a) Connecting the {Congress}: A Study of
  Cosponsorship Networks. \emph{Political Analysis}, \textbf{14}(04), 456--487.

\bibitem{Fowler2006}
\textsc{Fowler, J.~H.}  (2006b) {Legislative cosponsorship networks in the U.S.
  House and Senate}. \emph{Social Networks}, \textbf{28}, 454--465.

\bibitem{Ghoshal2009}
\textsc{Ghoshal, G., Zlati{\'{c}}, V., Caldarelli, G.  {\small \&} Newman, M.
  E.~J.}  (2009) {Random hypergraphs and their applications}. \emph{Physical
  Review E}.

\bibitem{Giusti2016}
\textsc{Giusti, C., Ghrist, R.  {\small \&} Bassett, D.~S.}  (2016) {Two's
  company, three (or more) is a simplex: Algebraic-topological tools for
  understanding higher-order structure in neural data}. \emph{Journal of
  Computational Neuroscience}, \textbf{41}(1), 1--14.

\bibitem{greenhill2011polynomial}
\textsc{Greenhill, C.}  (2011) A polynomial bound on the mixing time of a
  Markov chain for sampling regular directed graphs. \emph{The Electronic
  Journal of Combinatorics}, \textbf{18}(1), 234.

\bibitem{greenhill2014switch}
\textsc{\BySame{}}  (2014) The switch Markov chain for sampling irregular
  graphs. in \emph{Proceedings of the Twenty-Sixth Annual ACM-SIAM Symposium on
  Discrete Algorithms}, pp. 1564--1572. SIAM.

\bibitem{Grilli2017}
\textsc{Grilli, J., Barab{\'{a}}s, G., Michalska-Smith, M.~J.  {\small \&}
  Allesina, S.}  (2017) {Higher-order interactions stabilize dynamics in
  competitive network models}. \emph{Nature}, pp. 210--213.

\bibitem{jerrum1990fast}
\textsc{Jerrum, M.  {\small \&} Sinclair, A.}  (1990) Fast uniform generation
  of regular graphs. \emph{Theoretical Computer Science}, \textbf{73}(1),
  91--100.

\bibitem{Kami2018}
\textsc{Kaminski, B., Poulin, V., Pralat, P., Szufel, P.  {\small \&} Theberge,
  F.}  (2018) {Clustering via hypergraph modularity}. \emph{arXiv:1810.04816},
  pp. 1--17.

\bibitem{kannan1999simple}
\textsc{Kannan, R., Tetali, P.  {\small \&} Vempala, S.}  (1999) Simple
  Markov-chain algorithms for generating bipartite graphs and tournaments.
  \emph{Random Structures \& Algorithms}, \textbf{14}(4), 293--308.

\bibitem{Klimt2004}
\textsc{Klimt, B.  {\small \&} Yang, Y.}  (2004) {Introducing the Enron
  Corpus}. in \emph{CEAS}.

\bibitem{Kumar2018}
\textsc{Kumar, T., Vaidyanathan, S., Ananthapadmanabhan, H., Parthasarathy, S.
  {\small \&} Ravindran, B.}  (2018) {Hypergraph clustering: a modularity
  maximization approach}. \emph{arXiv:1812.10869}.

\bibitem{Mastrandrea2015}
\textsc{Mastrandrea, R., Fournet, J.  {\small \&} Barrat, A.}  (2015) {Contact
  patterns in a high school: A comparison between data collected using wearable
  sensors, contact diaries and friendship surveys}. \emph{PLoS ONE},
  \textbf{10}(9), 1--26.

\bibitem{mckay1990uniform}
\textsc{McKay, B.~D.  {\small \&} Wormald, N.~C.}  (1990) Uniform generation of
  random regular graphs of moderate degree. \emph{Journal of Algorithms},
  \textbf{11}(1), 52--67.

\bibitem{milo2002network}
\textsc{Milo, R., Shen-Orr, S., Itzkovitz, S., Kashtan, N., Chklovskii, D.
  {\small \&} Alon, U.}  (2002) Network motifs: simple building blocks of
  complex networks. \emph{Science}, \textbf{298}(5594), 824--827.

\bibitem{molloy1995critical}
\textsc{Molloy, M.  {\small \&} Reed, B.}  (1995) A critical point for random
  graphs with a given degree sequence. \emph{Random Structures \& Algorithms},
  \textbf{6}(2-3), 161--180.

\bibitem{Molloy1998}
\textsc{\BySame{}}  (1998) {The size of the giant component of a random graph
  with a given degree sequence}. \emph{Combinatorics, Probability, and
  Computing}, \textbf{7}(3), 295--305.

\bibitem{Newman2001b}
\textsc{Newman, M. E.~J.}  (2001) {Scientific collaboration networks. I.
  Network construction and fundamental results}. \emph{Physical Review E},
  \textbf{64}(1), 1--8.

\bibitem{Newman2002a}
\textsc{\BySame{}}  (2002) {Assortative mixing in networks}. \emph{Physical
  Review Letters}, \textbf{89}(20), 1--5.

\bibitem{Newman2003b}
\textsc{\BySame{}}  (2003) {Mixing patterns in networks}. \emph{Physical Review
  E}, \textbf{67}(2), 13.

\bibitem{Newman2010}
\textsc{\BySame{}}  (2010) \emph{{Networks: An Introduction}}. Oxford
  University Press.

\bibitem{Newman2001}
\textsc{Newman, M. E.~J., Strogatz, S.~H.  {\small \&} Watts, D.~J.}  (2001)
  {Random graphs with arbitrary degree distributions and their applications}.
  \emph{Physical Review E}, \textbf{64}(2), 17.

\bibitem{nishimura2018connectivity}
\textsc{Nishimura, J.}  (2018) The connectivity of graphs of graphs with
  self-loops and a given degree sequence. \emph{Journal of Complex Networks},
  \textbf{6}(6), 927--947.

\bibitem{patania2017shape}
\textsc{Patania, A., Petri, G.  {\small \&} Vaccarino, F.}  (2017) {The shape
  of collaborations}. \emph{EPJ Data Science}, \textbf{6}(1), 1--16.

\bibitem{Porter2005}
\textsc{Porter, M.~A., Mucha, P.~J., Newman, M. E.~J.  {\small \&} Warmbrand,
  C.~M.}  (2005) {A network analysis of committees in the U. S. House of
  Representatives}. \emph{Proceedings of the National Academy of Sciences},
  \textbf{102}(20), 7057--7062.

\bibitem{Saracco2015}
\textsc{Saracco, F., {Di Clemente}, R., Gabrielli, A.  {\small \&} Squartini,
  T.}  (2015) {Randomizing bipartite networks: the case of the World Trade
  Web}. \emph{Scientific Reports}, \textbf{5}(10595), 1--18.

\bibitem{Schaub2018a}
\textsc{Schaub, M.~T., Benson, A.~R., Horn, P., Lippner, G.  {\small \&}
  Jadbabaie, A.}  (2018) {Random walks on simplicial complexes and the
  normalized Hodge Laplacian}. \emph{arXiv:1807.05044}, pp. 1--36.

\bibitem{Sinha-2015-MAG}
\textsc{Sinha, A., Shen, Z., Song, Y., Ma, H., Eide, D., Hsu, B.-J.~P.  {\small
  \&} Wang, K.}  (2015) An Overview of {Microsoft Academic Service} ({MAS}) and
  Applications. in \emph{Proceedings of the 24th International Conference on
  World Wide Web}. {ACM} Press.

\bibitem{Stehle2011}
\textsc{Stehl{\'{e}}, J., Voirin, N., Barrat, A., Cattuto, C., Isella, L.,
  Pinton, J.~F., Quaggiotto, M., van~den Broeck, W., R{\'{e}}gis, C., Lina, B.
  {\small \&} Vanhems, P.}  (2011) {High-resolution measurements of
  face-to-face contact patterns in a primary school}. \emph{PLoS ONE},
  \textbf{6}(8), 1--13.

\bibitem{Strogatz1998b}
\textsc{Strogatz, S.~H.  {\small \&} Watts, D.~J.}  (1998) {Collective dynamics
  of ``small-world'' networks}. \emph{Nature}, \textbf{393}(June), 440--442.

\bibitem{strona2014fast}
\textsc{Strona, G., Nappo, D., Boccacci, F., Fattorini, S.  {\small \&}
  San-Miguel-Ayanz, J.}  (2014) A fast and unbiased procedure to randomize
  ecological binary matrices with fixed row and column totals. \emph{Nature
  communications}, \textbf{5}, 4114.

\bibitem{Ugander2012a}
\textsc{Ugander, J., Backstrom, L., Marlow, C.  {\small \&} Kleinberg, J.}
  (2012) {Structural diversity in social contagion}. \emph{Proceedings of the
  National Academy of Sciences}, \textbf{109}(16), 5962--5966.

\bibitem{verhelst2008efficient}
\textsc{Verhelst, N.~D.}  (2008) An efficient MCMC algorithm to sample binary
  matrices with fixed marginals. \emph{Psychometrika}, \textbf{73}(4), 705.

\bibitem{viger2005efficient}
\textsc{Viger, F.  {\small \&} Latapy, M.}  (2005) Efficient and simple
  generation of random simple connected graphs with prescribed degree sequence.
  in \emph{International Computing and Combinatorics Conference}, pp. 440--449.
  Springer.

\bibitem{Yin2018a}
\textsc{Yin, H., Benson, A.~R.  {\small \&} Leskovec, J.}  (2018) {Higher-order
  clustering in networks}. \emph{Physical Review E}, \textbf{97}(5).

\bibitem{Youn2015}
\textsc{Youn, H., Strumsky, D., Bettencourt, L. M.~A.  {\small \&} Lobo, J.}
  (2015) {Invention as a combinatorial process: evidence from US patents}.
  \emph{Journal of the Royal Society, Interface}, \textbf{12}(106), 1--8.

\bibitem{Young2017}
\textsc{Young, J.~G., Petri, G., Vaccarino, F.  {\small \&} Patania, A.}
  (2017) {Construction of and efficient sampling from the simplicial
  configuration model}. \emph{Physical Review E}, \textbf{96}(3), 1--6.

\end{thebibliography}

\end{document}